%%%%%%%%%%%%%%%%%%%%%%%%%%%%%%%%%%%%
%
%           ArXives Version of October 27, 2021
%
%           Last changes by DR 
%
%%%%%%%%%%%%%%%%%%%%%%%%%%%%%%%%%%%%
%% This is file `elsarticle-template-1-num.tex',
%%
%% Copyright 2009 Elsevier Ltd
%%
%% This file is part of the 'Elsarticle Bundle'.
%% ---------------------------------------------
%%
%% It may be distributed under the conditions of the LaTeX Project Public
%% License, either version 1.2 of this license or (at your option) any
%% later version.  The latest version of this license is in
%%    http://www.latex-project.org/lppl.txt
%% and version 1.2 or later is part of all distributions of LaTeX
%% version 1999/12/01 or later.
%%
%% Template article for Elsevier's document class `elsarticle'
%% with numbered style bibliographic references
%%
%% $Id: elsarticle-template-1-num.tex 149 2009-10-08 05:01:15Z rishi $
%% $URL: http://lenova.river-valley.com/svn/elsbst/trunk/elsarticle-template-1-num.tex $
%%
\documentclass[12pt]{article}
\let\today\relax
\makeatletter
\def\ps@pprintTitle{%
	\let\@oddhead\@empty
	\let\@evenhead\@empty
	\def\@oddfoot{}%
	\let\@evenfoot\@oddfoot}
\makeatother
%% Use the option review to obtain double line spacing
%% \documentclass[preprint,review,12pt]{elsarticle}
%% Use the options 1p,twocolumn; 3p; 3p,twocolumn; 5p; or 5p,twocolumn
%% for a journal layout:
%% \documentclass[final,1p,times]{elsarticle}
%% \documentclass[final,1p,times,twocolumn]{elsarticle}
%% \documentclass[final,3p,times]{elsarticle}
%% \documentclass[final,3p,times,twocolumn]{elsarticle}
%% \documentclass[final,5p,times]{elsarticle}
%% \documentclass[final,5p,times,twocolumn]{elsarticle}
%% The graphicx package provides the includegraphics command.
\usepackage{graphicx}
%% The amssymb package provides various useful mathematical symbols
\usepackage{amssymb}
%% The amsthm package provides extended theorem environments
%% \usepackage{amsthm}
\usepackage{showlabels}
\usepackage{epsf}
\usepackage{amsbsy,amsmath}
\usepackage{mathtools}
\usepackage{mathrsfs}
\usepackage{amsfonts}
\usepackage{amssymb}
\usepackage{enumitem}
\usepackage{eucal}
\usepackage{cases}
\usepackage{graphics,mathrsfs}
\usepackage{amsthm}
\usepackage{secdot}
\usepackage{esint}
\usepackage{hyperref}
\usepackage{varwidth}
\usepackage{tasks}
\usepackage{geometry}
\geometry{left=0.75in,right=0.75in,top=1.0in,bottom=1.0in}

\newtheorem{theorem}{Theorem}[section]
\newtheorem{lemma}[theorem]{Lemma}

\theoremstyle{definition}
\newtheorem{definition}[theorem]{Definition}

\theoremstyle{remark}
\newtheorem{remark}[theorem]{Remark}
\numberwithin{equation}{section}

\numberwithin{equation}{section}
\usepackage{xcolor}

%% The lineno packages adds line numbers. Start line numbering with
%% \begin{linenumbers}, end it with \end{linenumbers}. Or switch it on
%% for the whole article with \linenumbers after \end{frontmatter}.
%\usepackage{lineno}
%% natbib.sty is loaded by default. However, natbib options can be
%% provided with \biboptions{...} command. Following options are
%% valid:
%%   round  -  round parentheses are used (default)
%%   square -  square brackets are used   [option]
%%   curly  -  curly braces are used      {option}
%%   angle  -  angle brackets are used    <option>
%%   semicolon  -  multiple citations separated by semi-colon
%%   colon  - same as semicolon, an earlier confusion
%%   comma  -  separated by comma
%%   numbers-  selects numerical citations
%%   super  -  numerical citations as superscripts
%%   sort   -  sorts multiple citations according to order in ref. list
%%   sort&compress   -  like sort, but also compresses numerical citations
%%   compress - compresses without sorting
%%
%% \biboptions{comma,round}
%\biboptions{}
\begin{document}
% \begin{frontmatter}
	\title{\sc Elliptic problem driven by different \\ types of nonlinearities}
%\tnotetext[label1]{}
	\author{\sc Debajyoti Choudhuri$^a$ and Du\v{s}an D.  Repov\v{s}$^{b}$\footnote{Corresponding
		author: dusan.repovs@guest.arnes.si}\\
	\small{$^a$Department of Mathematics, National Institute of Technology Rourkela,}\\ 
	\small{Rourkela,  769008, Odisha, India.}\\	 
	\small{$^b$Faculty of Education and Faculty of Mathematics and Physics,}\\
	\small{University of Ljubljana, Ljubljana, 1000, Slovenia.}\\
	\small{\it Email: dc.iit12@gmail.com \& dusan.repovs@guest.arnes.si}}
\date{\today}
\maketitle
\begin{abstract}
\noindent In this paper we establish the existence and multiplicity of nontrivial solutions to the following problem 
\begin{align*}
\begin{split}
(-\Delta)^{\frac{1}{2}}u+u+(\ln|\cdot|*|u|^2)&=f(u)+\mu|u|^{-\gamma-1}u,~\text{in}~\mathbb{R},
\end{split}
\end{align*}
where $\mu>0$, $(*)$ is the {\it convolution} operation between two functions, $0<\gamma<1$, $f$ is a function with a certain type of growth. We prove the existence of a nontrivial solution at a certain mountain pass level and another ground state solution when the nonlinearity $f$ is of exponential critical growth. 
	\begin{flushleft}
		{\it Keywords}:~  Fractional Laplacian, ground state solution, singularity.\\
		{\it Math. Subj. Classif. (2010)}:~35R11, 35J75, 35J60, 46E35.
	\end{flushleft}
\end{abstract}
%\end{frontmatter}
%%
%% Start line numbering here if you want
%%
%\linenumbers
%% main text
\section{Introduction}
\noindent The main objective of this paper is to establish the existence and multiplicity of nontrivial solutions for the following problem.
\begin{align}\label{main}
\begin{split}
(-\Delta)^{\frac{1}{2}}u+u+(\ln|\cdot|*|u|^2)&=f(u)+\mu|u|^{-\gamma-1}u~\text{in}~\mathbb{R},
\end{split}
\end{align}
where $\mu>0$, $(*)$ is the {\it convolution} operation between two functions, $0<\gamma<1$. The fractional Laplacian operator $(-\Delta)^{\frac{1}{2}}$ is defined as
\begin{align}\label{laplacian}
\begin{split}
(-\Delta)^{\frac{1}{2}}u(x)&=C\left(1,\frac{1}{2}\right)\lim_{\epsilon\rightarrow 0}\int_{\mathbb{R}\setminus (x-\epsilon,x+\epsilon)}\frac{u(x)-u(y)}{|x-y|^{2}}dy,~\text{for every}~x\in\mathbb{R},
\end{split}
\end{align}
where $C\left(1,\frac{1}{2}\right)=\frac{2}{\pi}$ is a normalization constant (cf. 
{\sc Ros-Oton-Serra} \cite [Formula $A.1$]{rosoton1}).
Here, $f$ is a continuous function with an exponential growth whose primitive is $F(t)=\int_{0}^tf(s)ds$. Using subcritical or critical polynomial growth of the function $f$ is quite common in the literature pertaining to problems on elliptic PDEs. However, very few have considered the case when the nonlinear term has an exponential subcritical or a critical growth. To begin with, we first recall that a function $f$ is said to have a subcritical exponential growth at $\infty$ if
$$\underset{t\rightarrow\infty}{\lim}\frac{f(t)}{e^{\beta t^2}-1}=0,~\text{for every}~\beta>0.$$
Similarly, $f$ is said to be of critical exponential growth at $\infty$ if there exist $\theta\in(0,\pi]$ and $\beta_0\in(0,\theta)$ such that
\[\underset{t\rightarrow\infty}{\lim}\frac{f(t)}{e^{\beta t^2}-1}= \begin{cases}
0, & \text{for every}~\beta>\theta \\
\infty, & \text{for every}~\beta<\theta
\end{cases}\]
The following are some hypotheses which are commonly assumed for problems with the Moser-Trudinger inequality (cf.
 {\sc do \'{O} et al.} \cite{miya1}
  and
   {\sc Felmer et al.} \cite{felm1}):
\begin{eqnarray*}
(A_1)&~&f\in C(\mathbb{R},\mathbb{R}), f(0)=0,~\text{has critical exponential growth and}~F(t)\geq 0,~\text{for every}~t\in\mathbb{R};\\
(A_2)&~&\underset{|t|\rightarrow 0}{\lim}\frac{f(t)}{|t|}=0;\\
(A_3)&~&\text{there exists}~L>4~\text{such that}~f(t)t\geq L F(t)>0,~\text{for every}~t\in\mathbb{R}~\text{(this condition is used}\\
& &\text{ to verify that a {\it Cerami sequence} is bounded in the {\it Sobolev space}}~H^{\frac{1}{2}}(\mathbb{R}));\\
(A_4)&~&\text{there exist}~q>4~\text{and}~C_q>\frac{[2(q-2)]^{\frac{q-2}{2}}}{q^{\frac{q}{2}}}\frac{(S_q)^q}{r_0^{q-2}}~\text{such that}~F(t)\geq C_q|t|^q,~\text{for every}~t\in\mathbb{R},\\
& &~\text{where}~S_q, r_0>0~(S_q~\text{will be defined in Lemma}~\ref{bdd_seq}).
\end{eqnarray*}
\noindent We now give a short review of the related results. The problem that inspired us to investigate the current problem is from the paper by 
{\sc Bo\"{e}r-Miyagaki} \cite{miya0}.
 The novelty addressed in this work is due to the presence of a singular term which is difficult to handle since the corresponding energy functional ceases to be $C^1$. This poses an extra challenge in applying the Mountain pass theorem and other results in variational methods that demand the functional to be $C^1$. To add to these difficulties, the logarithmic term poses a great challenge to establishing the existence of a convergent subsequence of a Cerami sequence. The idea of module translations from the paper by 
{\sc Cingolani-Weth} \cite{13} 
will be used. However, since the norm of the space $X$ (the solution space which will be defined in the next section) is not invariant under translations, a new difficulty arises. The issues pertaining to the exponential term will be explained later.\\
Problems involving nonlocal operators are important in many fields of science and engineering e.g. optimization, finance, phase transitions, stratified materials crystal dislocations, anomalous materials, semipermeable membranes, flame propagation, water waves, soft thin films, conservation laws, etc. We refer the reader to
 {\sc Caffarelli}~\cite{10},
  {\sc Di Nezza et al.}~\cite{16}, 
  and the references therein. The literature pertaining to problems without the logarithmic Choquard and the singular term is quite vast and is impossible to list everything in this paper. A seminal work in the field of singularity driven problems is due to
   {\sc Lazer-Mc Kenna}~\cite{lazer}. 
   Thereafter the problems with singularity were studied by many researchers; see   
     {\sc Ghanmi-Saoudi}~\cite{saoudi1},
       {\sc Oliva-Petitta}~\cite{petitta1},
     {\sc Saoudi et al.}~\cite{ghosh1}, 
     and the references therein. 
     Some of the other works that the readers can consult involve the fractional Laplacian operator $(-\Delta)^s$ when $2s<N$ and $s\in(0,1)$ are   
            {\sc Chang-Wang} \cite{14} and
        {\sc Felmer et al.} \cite{felm1}.
        We note that 
         {\sc Felmer et al.}~\cite{felm1}
         also studied some properties of the solutions besides regularity. Furthermore,
          {\sc do \'{O} et al.} \cite{18}
           studied the problem without the singular and the logarithmic term with potentials that vanish at infinity. Some more suggested papers are 
           {\sc Autuori-Pucci}~\cite{3}, 
           {\sc Cao}~\cite{11},
 {\sc do \'{O} et al.}~\cite{20}, 
 {\sc Iannizzotto-Squassina}~\cite{26},
  {\sc Lam-Lu}~\cite{28},
  {\sc Moser} \cite{33},
  and
  {\sc Pucci et al.}~\cite{27}.\\ 
We now turn our attention to the problems involving Choquard logarithmic term. Some related references are
 {\sc Alves-Figueiredo} \cite{2}, 
 {\sc Cingolani-Jeanjean} \cite{12},
  {\sc Cingolani-Weth} \cite{13},
   {\sc Du-Weth} \cite{21}, 
   and
   {\sc Wen et al.} \cite{39}.
   Furthermore,
    {\sc Cingolani-Weth} \cite{13} 
    proved the existence of infinitely many distinct solutions and a ground-state solution, with $V:\mathbb{R}^2\rightarrow(0,\infty)$ which is continuous and $\mathbb{Z}^2$-periodic, $f(u)=b|u|^{p-2}u$, $b>0$. Since the setting is periodic, the global Palais-Smale condition can fail due to the invariance of the functional under the $\mathbb{Z}^2-$ translations. To tackle this problem,
     {\sc Du-Weth} \cite{21}
      proved the existence of a mountain pass solution and a ground state solution for the local problem \eqref{main} but without the singular term in the case when $V(x)\equiv \alpha>0$, $2<p<4$, and $f(u)=|u|^{p-2}u$. They went on to further prove that if $p\geq 3$, then both the energy levels are the same and provided a characterization for them. 
      {\sc Cingolani-Jeanjean} \cite{12}
       proved the existence of stationary waves with prescribed norm by considering $\lambda\in\mathbb{R}$.
        {\sc Wen et al.} \cite{39} 
        considered a nonlinearity with a polynomial growth. 
       {\sc Alves-Figueiredo} \cite{2}
        proved the existence of a ground state solution to problem \eqref{main} without the singular term  but with a nonlinearity of the Moser-Trudinger type. 
        For Choquard problems one can refer to
         {\sc Abdellaoui-Bentifour} \cite{choq1},           
          {\sc Biswas-Tiwari} \cite{biswas1}
           {\sc Bonheure et al.} \cite{9},{
           \sc Goel et al.} \cite{goel1},
            {\sc Guo-Wu} \cite{25},                  
                            {\sc Lieb} \cite{31},
              and
              {\sc Panda et al.} \cite{choq2}.\\
This paper is organized as follows. Section $2$ is a quick look at the mathematical background, space description. In Section $3$ we describe an application of the fractional Laplacian operator for dimension $N=1$ along with a few auxiliary lemmas.  In Section $4$ we prove a few auxiliary lemmas and our main result. Finally, in Section $5$, we give an appendix to the proofs of all results which have been used in the proof of the main theorem.
\section{Preliminaries}
\noindent This section is devoted to presentation of the most important notations, results, remarks that will be used in our study of problem \eqref{main}
(for the remaining background material we refer the reader to the comprehensive monograph by
{\sc Papageorgiou-R\u{a}dulescu-Repov\v{s}} \cite{PRR}), and the statement of the main result.\\
We begin by defining the Hilbert space
$$W^{\frac{1}{2},2}(\mathbb{R})=H^{\frac{1}{2}}(\mathbb{R})=\left\{u\in L^2(\mathbb{R}):\iint_{\mathbb{R}\times\mathbb{R}}\frac{|u(x)-u(y)|^2}{|x-y|^{2}}dxdy<\infty\right\},$$
equipped with the norm 
\begin{align}\label{norm1}
\begin{split}
\|u\|^2&=\|u\|_2^2+\iint_{\mathbb{R}\times\mathbb{R}}\frac{|u(x)-u(y)|^2}{|x-y|^{2}}dxdy=\|u\|_2^2+[u]_{\frac{1}{2},2}^2.
\end{split}
\end{align}
We denote the Schwartz class of functions by $\mathcal{S}(\mathbb{R})$.  Thus for any $u\in\mathcal{S}(\mathbb{R})$, the Fourier transform of $(-\Delta)^{\frac{1}{2}}u$ is given by $|\xi|\hat{u}$, where $\hat{u}$ denotes the Fourier transform of $u$. 
Also, by Proposition $3.6$ given in
 {\sc Di Nezza et al.} \cite{16}, 
 we have
$$\|(-\Delta)^{\frac{1}{4}}u\|_2^2=\frac{1}{2\pi}\iint_{\mathbb{R}\times\mathbb{R}}\frac{(u(x)-u(y))^2}{|x-y|^{2}}dxdy,~\text{for every}~u\in H^{\frac{1}{2}}(\mathbb{R}).$$
The factor $\frac{1}{2\pi}$ will be ignored in the paper throughout.   

\noindent Next, we define a slightly smaller space that will make the associated energy functional well-defined (cf.
 {\sc Stubbe}~\cite[Lemma 2.1]{38}):
$$X=\left\{u\in H^{\frac{1}{2}}(\mathbb{R}):\int_{\mathbb{R}}\ln(1+|x|)(u(x))^2dx<\infty\right\},$$
endowed with the norm
\begin{align}\label{norm2}
\begin{split}
\|u\|_X^2&=\|u\|_2^2+\int_{\mathbb{R}}\ln(1+|x|^2)(u(x))^2dx=\|u\|_2^2+\|u\|_{*}^2.
\end{split}
\end{align}
Then $X$ is a Hilbert space as well. We define three auxiliary bilinear forms as follows
\begin{align}\label{bilmap}
A(u,v)&=\iint_{\mathbb{R}\times\mathbb{R}}\ln(1+|x-y|)u(x)v(y)dxdy,\\
B(u,v)&=\iint_{\mathbb{R}\times\mathbb{R}}\ln\left(1+\frac{1}{|x-y|}\right)u(x)v(y)dxdy,\\
C(u,v)&=A(u,v)-B(u,v)=\iint_{\mathbb{R}\times\mathbb{R}}\ln(|x-y|)u(x)v(y)dxdy.
\end{align}
We further define the functionals
\begin{align}\label{fnalmap}
U:H^{\frac{1}{2}}(\mathbb{R})\rightarrow\mathbb{R},~V:L^4(\mathbb{R})\rightarrow\mathbb{R},~W:H^{\frac{1}{2}}(\mathbb{R})\rightarrow\mathbb{R},
\end{align}
by $U(u)=A(u^2,u^2)$, $V(u)=B(u^2,u^2)$, $W(u)=C(u^2,u^2)$.\\
Clearly, a combination of the Hardy-Littlewood-Sobolev inequality (HLS)
 {\sc Lieb} \cite{32}, 
 $0\leq\ln(1+r)\leq r$, for any $r>0$, leads to the inequality.
\begin{align}\label{cont_B}
\begin{split}
|B(u,v)|&\leq\iint_{\mathbb{R}\times\mathbb{R}}\frac{1}{|x-y|}u(x)v(y)dxdy\leq C_0\|u\|_2\|v\|_2,~\text{for every}~u,v\in L^2(\mathbb{R}),
\end{split}
\end{align}
where $C_0$ is an (HLS) constant. Consequently, we also have
\begin{align}\label{cont_V}
\begin{split}
|V(u)|&\leq C_0\|u\|_4^4,~\text{for every}~u\in L^4(\mathbb{R}). 
\end{split}
\end{align}
A standard property of the logarithmic function is
\begin{align}\label{log_fun_prop}
\ln(1+|x\pm y|)&\leq\ln(1+|x|+|y|)\leq\ln(1+|x|)+\ln(1+|y|),~\text{for every}~x,y\in\mathbb{R}.
\end{align}
Using, \eqref{log_fun_prop} in tandem with the H\"{o}lder inequality, we get
\begin{align}\label{cont_A}
A(uv,wz)&\leq\|u\|_{*}\|v\|_{*}\|w\|_2\|z\|_2+\|u\|_{2}\|v\|_{2}\|w\|_{*}\|z\|_{*},~\text{for every}~u,v,w,z\in L^2(\mathbb{R}).
\end{align}
\noindent The following lemma is the celebrated result of {\sc Moser-Trudinger} \cite{34}.
\begin{lemma}\label{MT}
	There exists $0<\omega\leq \pi$ such that for all $\beta\in(0,\omega)$, there exists a constant $C_{\beta}>0$ satisfying
	$$\int_{\mathbb{R}}(e^{\beta u^2}-1)dx\leq C_{\beta}\|u\|_2^2,$$
	for all $u\in H^{\frac{1}{2}}(\mathbb{R})$ with $\|(-\Delta)^{\frac{1}{4}}u\|\leq 1$.
\end{lemma}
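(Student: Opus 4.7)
The plan is to prove this by combining a Taylor-series decomposition of the exponential with sharp power-type Sobolev embeddings for $H^{1/2}(\mathbb{R})$, in the spirit of Moser's original argument for $W^{1,N}(\mathbb{R}^N)$ as adapted to the fractional setting by Ozawa and by Iula--Martinazzi--Rivi\`ere. First, by monotone convergence one has
$$\int_{\mathbb{R}}(e^{\beta u^2}-1)\,dx=\sum_{k=1}^{\infty}\frac{\beta^k}{k!}\|u\|_{2k}^{2k},$$
so the problem reduces to controlling each $\|u\|_{2k}^{2k}$ with explicit $k$-dependence and then verifying that the resulting numerical series converges once $\beta$ is smaller than a suitable threshold $\omega$.

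Next, I would establish a Gagliardo--Nirenberg-type interpolation inequality of the form
$$\|u\|_{2k}^{2k}\leq M^{2k-2}\,k^{k-1}\,\|(-\Delta)^{1/4}u\|_2^{2k-2}\,\|u\|_2^2,$$
valid for every integer $k\geq 1$ and every $u\in H^{1/2}(\mathbb{R})$, with a universal constant $M>0$. Such a bound follows from the continuous embedding $H^{1/2}(\mathbb{R})\hookrightarrow L^p(\mathbb{R})$ for every finite $p\geq 2$, upgraded via a Riesz--Thorin interpolation between $L^2(\mathbb{R})$ and an exponential Orlicz endpoint so as to track how the embedding constants grow with $p$. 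Under the normalization $\|(-\Delta)^{1/4}u\|_2\leq 1$ this simplifies to $\|u\|_{2k}^{2k}\leq M^{2k-2}\,k^{k-1}\,\|u\|_2^2$.

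Substituting back yields
$$\int_{\mathbb{R}}(e^{\beta u^2}-1)\,dx\leq\|u\|_2^2\sum_{k=1}^{\infty}\frac{\beta^k M^{2k-2}k^{k-1}}{k!},$$
and Stirling's formula $k^{k-1}/k!\sim e^k/(k\sqrt{2\pi k})$ shows the series converges precisely when $\beta e M^2<1$. One then sets $\omega:=\min\{\pi,\,1/(eM^2)\}$ and takes $C_\beta$ to be $M^{-2}$ times the resulting sum. The main obstacle is the second step: a bare application of Sobolev embedding only yields $\|u\|_{2k}\leq C_k\|u\|_{H^{1/2}}$ with no usable control on $C_k$, which is insufficient to sum the exponential series term by term. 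The technical heart of the argument is therefore the sharp $k^{k-1}$ growth of the embedding constants, obtained either through careful interpolation with an Orlicz endpoint or, equivalently, through sharp heat-semigroup bounds for $e^{-t(-\Delta)^{1/2}}$ on $L^2(\mathbb{R})$; this is the ingredient that ensures $\omega>0$ and distinguishes the statement from a merely local exponential integrability result on bounded intervals.
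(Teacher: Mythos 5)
The paper offers no proof of this lemma: it is invoked as a known result and attributed, via the citation to Ozawa's \emph{On Critical Cases of Sobolev's Inequalities}, to the fractional Trudinger--Moser theory, so any argument you supply is by construction a different route from the paper's (which is simply a citation). Your outline is, in substance, a reconstruction of Ozawa's original proof: expand $\int_{\mathbb{R}}(e^{\beta u^{2}}-1)\,dx=\sum_{k\ge 1}\frac{\beta^{k}}{k!}\|u\|_{2k}^{2k}$ by monotone convergence, establish a Gagliardo--Nirenberg inequality of the form $\|u\|_{2k}\le M\,k^{1/2}\,\|(-\Delta)^{1/4}u\|_{2}^{1-1/k}\|u\|_{2}^{1/k}$ with explicit $k$-dependence of the constant, and sum the series via Stirling; this is exactly where the threshold $\omega$ comes from, and your diagnosis that the whole difficulty lies in the growth of the embedding constants (a bare $\|u\|_{2k}\le C_{k}\|u\|_{H^{1/2}}$ being useless) is correct. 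One genuine reservation: the first mechanism you propose for the key interpolation inequality --- Riesz--Thorin interpolation between $L^{2}$ and an ``exponential Orlicz endpoint'' --- is circular, because the embedding of $H^{1/2}(\mathbb{R})$ into the Orlicz space generated by $e^{t^{2}}-1$ \emph{is} the inequality being proved; it cannot serve as an input. The argument must instead go through the second route you mention, namely sharp smoothing estimates for the Poisson semigroup $e^{-t(-\Delta)^{1/2}}$ (whose kernel $P_{t}(x)=\tfrac{t}{\pi(t^{2}+x^{2})}$ gives $\|e^{-t(-\Delta)^{1/2}}\|_{L^{2}\to L^{\infty}}\lesssim t^{-1/2}$), or equivalently Ozawa's low/high frequency splitting of $\hat{u}$ combined with Hausdorff--Young and Plancherel; a final writeup should commit to that and drop the Orlicz interpolation. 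Whether the resulting bound carries $k^{k-1}$ or $k^{k}$ is immaterial, since Stirling still yields convergence for $\beta$ below an explicit multiple of $M^{-2}$, and setting $\omega=\min\{\pi,\,1/(eM^{2})\}$ is consistent with the statement's requirement $0<\omega\le\pi$.
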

\noindent The associated energy functional 
\begin{align}\label{energy}
E(u)&=\frac{1}{2}\|u\|^2+\frac{1}{4}W(u)-\int_{\mathbb{R}}F(u)dx-\frac{\mu}{1-\gamma}\int_{\mathbb{R}}|u|^{1-\gamma}dx.
\end{align}
is well-defined due to the lemma above and the space definition. However, the functional is not $C^1$ which disallows the use of the basic results of variational analysis. To tackle this, we define the cutoff functional $\tilde{E}$ as follows
\begin{align}\label{energy_mod}
\tilde{E}(u)&=\frac{1}{2}\|u\|^2+W(u)-\int_{\mathbb{R}}\tilde{G}(u)dx.
\end{align}
Here
\[   
\tilde{G}(t) = 
\begin{cases}
\mu |t|^{1-\gamma}+F(t), &~\text{if}~|t|>\underline{u}_{\mu}\\
\mu \underline{u}_{\mu}^{1-\gamma}+F(\underline{u}_{\mu}),&~\text{if}~|t|\leq \underline{u}_{\mu}
\end{cases}\]
where $F(t)=\int_0^tf(s)ds$ and $\underline{u}_{\mu}$ is a solution to
\begin{eqnarray}\label{auxprob}
(-\Delta)^{\frac{1}{2}}u+u+(\ln|\cdot|*|u|^2)&=\mu u^{-\gamma},~\text{in}~\mathbb{R},
\end{eqnarray}
whose existence is guaranteed by Lemma \ref{existence_positive_soln}. Moreover, by Lemma \ref{u_greater_u_lambda} in Appendix, one can establish that for a range of $\mu$, a solution to \eqref{main} is such that $u>\underline{u}_{\mu}$ a.e. in $\Omega$.\\
\noindent Furthermore, observe that $$\underset{u\rightarrow\infty}{\lim}\frac{\tilde{G}(u)}{e^{\beta u^2}-1}=0.$$
Also, we have $$0<\frac{\tilde{G}(u)}{e^{\beta  u^2}-1}<\frac{\tilde{G}(u)}{e^{\beta \underline{u}_{\lambda}^2}-1}.$$ 
Under the limit $u\rightarrow 0$, we have $\frac{\tilde{G}(u)}{e^{\beta \underline{u}_{\lambda}^2}-1}\rightarrow\frac{\underline{u}_{\lambda}^{1-\gamma}}{e^{\beta \underline{u}_{\lambda}^2}-1}$.
For $x\in\Omega$ which satisfy $0<\underline{u}_{\lambda}(x)<M<<1$, we have $(\underline{u}_{\lambda}(x))^2<(\underline{u}_{\lambda}(x))^{1-\gamma}$. Hence $\frac{1}{\beta}\approx\frac{\underline{u}_{\lambda}(x)^2}{e^{\beta  \underline{u}_{\lambda}(x)^2}-1}<\frac{\underline{u}_{\lambda}(x)^{1-\gamma}}{e^{\beta\underline{u}_{\lambda}(x)^2}-1}$. 
This implies that for a suitable $C'>0$ we have
$$\tilde{G}(u)\leq C' (e^{\beta u^2}-1)~\text{a.e. in}~\Omega.$$
By Lemmas  \ref{MT}, \ref{comp_emb} we can now conclude that 
\begin{align}\label{sing_est}
\int_{\mathbb{R}}\tilde{G}(u)dx \leq C' \int_{\mathbb{R}}(e^{\beta u^2}-1)dx\leq C_{\beta}\|u\|_2^2\leq C' \|u\|^2.
\end{align}
\begin{remark}\label{conseq0}
Under the hypothesis $(A_1)-(A_2)$, there exists, for $q>2$, $\epsilon>0$ and $\beta>\theta$, a constant $c_2>0$ such that 
\begin{align}\label{conseq1}
\begin{split}
|F(u)|&\leq \frac{\epsilon}{2}|u|^2+c_2|u|^{q}(e^{\beta|u|^2}-1),~\text{for every}~u\in X.
\end{split}
\end{align}
Furthermore, there exists a constant $c_3>0$ satisfying the following inequality
\begin{align}\label{conseq2}
\begin{split}
|f(u)|&\leq \epsilon|u|+c_3|u|^{q-1}(e^{\beta|u|^2}-1),~\text{for every}~u\in X.
\end{split}
\end{align}
An important consequence of \eqref{conseq1} is the following:\\
Consider $\rho_1, \rho_2>1$, $\rho_1\sim 1$, $\rho_2>2$, such that $\frac{1}{\rho_1}+\frac{1}{\rho_2}=1$.  Then it follows that for any $u\in H^{\frac{1}{2}}(\mathbb{R})$, $\epsilon>0$, $\beta>\theta$, we have that 
\begin{align}\label{expo_est}
\int_{\mathbb{R}}|F(u)|dx&\leq\frac{\epsilon}{2}\|u\|^2+c_2\|u\|_{\rho_2q}^q\left(\int_{\mathbb{R}}(e^{\rho_1\beta u^2}-1)dx\right)^{\frac{1}{\rho_1}}.
\end{align}
\end{remark}
\begin{remark}\label{subseq_rem}
Henceforth, \begin{enumerate}\item the notation of a subsequence will be the same as its sequence; \item the notation for cutoff energy functional $\tilde{E}$ will be continued to be denoted by $E$. \end{enumerate}
\end{remark} 

\noindent
We are now in a position to state our main result.
\begin{theorem}\label{main_res_1}
	Assume that hypotheses $(A_1)-(A_4)$ are satisfied and let $q>4$ and $C_q>0$ be chosen sufficiently large. Then
	\begin{enumerate}[label=(\roman*)]
		\item problem \eqref{main} has a solution $u\in X\setminus\{0\}$ such that 
		$$E(u)=\underset{\gamma\in\Gamma}{\inf}\underset{t\in[0,1]}\max\{E(\gamma(t))\}=d,$$
		where $$\Gamma=\{\gamma\in C([0,1],X):\gamma(0), E(\gamma(1))<0\}$$ is the class of paths on $X$ joining $\gamma(0)$ and $\gamma(1)$;
		\item problem \eqref{main} has a ground state solution $u\in X\setminus\{0\}$ such that\\ $$E(u)=\inf\{E(v):v\in X~\text{is a solution of problem} ~\eqref{main}\}.$$
	\end{enumerate}
\end{theorem}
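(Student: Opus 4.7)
My overall strategy is to apply variational methods to the $C^{1}$ cut-off functional $\tilde{E}$ defined in \eqref{energy_mod}, produce a non-trivial critical point at the mountain-pass level for part (i) and at the ground-state level for part (ii), and finally use Lemma~\ref{u_greater_u_lambda} to ensure $u>\underline{u}_{\mu}$ a.e., so that $\tilde{E}$ coincides with $E$ near $u$ and the critical point actually solves \eqref{main}. Mountain-pass geometry is checked as follows: \eqref{expo_est} together with the singular control \eqref{sing_est}, Lemma~\ref{MT}, and the continuous embeddings $X\hookrightarrow H^{1/2}(\mathbb{R})\hookrightarrow L^{r}(\mathbb{R})$ for $r\geq 2$ imply $\tilde{E}(u)\geq \alpha>0$ on a small sphere $\|u\|_{X}=\rho$. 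For a fixed $u_{0}\in X\setminus\{0\}$, hypothesis $(A_{4})$ yields $F(tu_{0})\geq C_{q}t^{q}|u_{0}|^{q}$ with $q>4$, while $W(tu_{0})=t^{4}W(u_{0})$, so $\tilde{E}(tu_{0})\to -\infty$ and one obtains $e=t_{0}u_{0}$ with $\tilde{E}(e)<0$.

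A non-smooth mountain-pass theorem then produces a Cerami sequence $\{u_{n}\}\subset X$ at level $d$. Testing $\tilde{E}'(u_{n})$ against $u_{n}$ and subtracting a multiple of $\tilde{E}(u_{n})$ (with $L>4$ from $(A_{3})$), the coefficient $\tfrac{L}{4}-1>0$ in front of $W(u_{n})$, together with the sign condition $f(t)t\geq LF(t)>0$ and the singular bound \eqref{sing_est}, shows that $\{u_{n}\}$ is bounded in $X$. The \textbf{main obstacle} I expect is the loss of compactness caused by the interaction of exponential critical growth with the logarithmic Choquard term. The key device is the level estimate: using $(A_{4})$ with $C_{q}$ sufficiently large (the threshold depending on $\omega$, $\theta$, $S_{q}$ and $r_{0}$ from Lemma~\ref{bdd_seq}), the mountain-pass level $d$ can be pushed below the Moser-Trudinger threshold, forcing $\beta\|(-\Delta)^{1/4}u_{n}\|_{2}^{2}<\omega$ for some $\beta>\theta$ along the sequence. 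By Lemma~\ref{MT} the family $e^{\beta u_{n}^{2}}-1$ is then bounded in some $L^{\rho_{1}}(\mathbb{R})$ with $\rho_{1}\sim 1$, and \eqref{conseq2}-\eqref{expo_est} let us pass to the limit in $\int_{\mathbb{R}}f(u_{n})\varphi\,dx$ for every $\varphi\in X$.

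A further difficulty is that the weight $\ln(1+|x|^{2})$ in $\|\cdot\|_{*}$ destroys the translation invariance of the $X$-norm, so the Cingolani-Weth device of replacing $u_{n}$ by $u_{n}(\cdot+y_{n})$ needs care. I would first apply a Lions-type concentration lemma in $H^{1/2}(\mathbb{R})$ to locate $y_{n}\in\mathbb{R}$ with $u_{n}(\cdot+y_{n})\rightharpoonup u\neq 0$ weakly in $H^{1/2}(\mathbb{R})$, then use the logarithmic identity \eqref{log_fun_prop} together with the bilinear inequality \eqref{cont_A} to absorb the weight shift produced by the translation, showing that $u\in X$ and that $u$ is a critical point of $\tilde{E}$; compactness of the Choquard term along bounded sequences in $X$ is obtained via Brezis-Lieb-type splittings of $W$. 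For part (ii), I would set $m=\inf\{\tilde{E}(v):v\in X\setminus\{0\},\ \tilde{E}'(v)=0\}$, which is finite and non-empty by (i); any minimising sequence $\{v_{n}\}$ is itself a Cerami-type sequence at level $m$ and is bounded by the estimate above. Passing to a weak limit $v$ along translates and using $F(t)-\tfrac{1}{L}f(t)t\geq 0$ from $(A_{3})$ via Fatou's lemma gives $\tilde{E}(v)\leq\liminf\tilde{E}(v_{n})=m$, so $v$ is a ground-state solution of \eqref{main}.
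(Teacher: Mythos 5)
Your plan follows essentially the same route as the paper: mountain-pass geometry for the cut-off functional, a Cerami sequence whose boundedness comes from $(A_3)$, the $(A_4)$/large-$C_q$ level estimate pushing $d$ below the Moser--Trudinger threshold (the paper's Lemma \ref{bdd_seq}), recovery of compactness via integer translations handled through \eqref{log_fun_prop} despite the non-invariance of $\|\cdot\|_X$ (the paper's Lemma \ref{alternative}), and part (ii) by minimizing over the set of nontrivial critical points. Two bookkeeping slips to fix: in the boundedness estimate $E(u_n)-\tfrac14\langle E'(u_n),u_n\rangle$ the Choquard term $W$ cancels exactly by its $4$-homogeneity and the coefficient $\tfrac{L}{4}-1$ sits in front of $\int_{\mathbb{R}}F(u_n)\,dx$, not in front of $W(u_n)$ (which is sign-indefinite, so a positive coefficient there would not help); and $(A_3)$ gives $\tfrac{1}{L}f(t)t-F(t)\geq 0$, i.e.\ the reverse of the inequality you wrote for the Fatou step in part (ii) --- the paper avoids Fatou altogether there by getting strong convergence of the translates from Lemma \ref{alternative}.
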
 

\noindent An important application of the fractional Laplacian operator for dimension $N=1$ can be found in 
{\sc Dipierro et al.}\cite{dipi}.  
We give a gist of the version of a model for the dynamics of the dislocation of atoms in crystals. The model is related to the Peierls-Nabarro energy functional. The system is a hybrid combination in which a discrete dislocation occurring along dislocation dynamics in crystals a slide line is incorporated in a continuum medium. The problem is as follows
\begin{align}\label{app_1}
\begin{split}
\frac{\partial}{\partial t}v&=(-\Delta)^sv-P'(v)+\sigma_{\epsilon}(t,x),~\text{in}~(0,+\infty)\times\mathbb{R},
\end{split}
\end{align}
where $s\in\left[\frac{1}{2},1\right)$, $P$ is a $1$-periodic potential and $\sigma_{\epsilon}$ plays the role of external stress acting on the material. Setting $v_{\epsilon}(t,x)=v\left(\frac{t}{\epsilon^{1+2s}},\frac{x}{\epsilon}\right)$, equation \eqref{app_1} can be recast as follows
\begin{align}\label{app_2}
\begin{split}
\frac{\partial}{\partial t}v_{\epsilon}&=\frac{1}{\epsilon}\left((-\Delta)^sv_{\epsilon}-\frac{1}{\epsilon^{2s}}P'(v_{\epsilon})+\sigma_{\epsilon}(t,x)\right),~\text{in}~(0,+\infty)\times\mathbb{R}\\
v_{\epsilon}(0,x)&=v_{\epsilon}^0,~\text{in}~\mathbb{R}.
\end{split}
\end{align}
For a suitable choice of $v_{\epsilon}^0$ the {\it basic layer solution} $u$ is introduced, which happens to be a solution to the following problem
\begin{align}\label{app_3}
\begin{split}
(-\Delta)^su-P'(u)=0,~\text{in}~\mathbb{R}\\
u'>0~\text{and}~u(-\infty)=0, u(0)=\frac{1}{2}, u(+\infty)=1.
\end{split}
\end{align}
One can see that the problem considered in this article has a proper physical application and is a testimony to the importance of the problem considered in this paper.
\section{Auxiliary Lemmas} 
\noindent In this section we will discuss some auxiliary lemmas. 
\begin{lemma}\label{comp_emb}(cf.
 {\sc Bo\"{e}r-Miyagaki} \cite[Lemma $2.1$]{miya0})
The space $X$ is continuously embedded in $H^{\frac{1}{2}}(\mathbb{R})$ and compactly embedded in $L^p(\mathbb{R})$, for every $p\geq 2$.
\end{lemma}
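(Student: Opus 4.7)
The plan is to treat the two assertions separately. For the continuous embedding into $H^{1/2}(\mathbb{R})$, I would point out that membership in $X$ already requires $u \in H^{1/2}(\mathbb{R})$ by the very definition of $X$, and with the natural graph norm $\|u\|_X^2 = \|u\|^2 + \|u\|_*^2$ (the $H^{1/2}$-norm augmented by the weighted piece) one has $\|u\|_{H^{1/2}} \leq \|u\|_X$, which delivers the continuous inclusion at once.

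For the compactness statement, take any bounded sequence $\{u_n\} \subset X$ with $\|u_n\|_X \leq M$. Since $X$ is a reflexive Hilbert space, a subsequence satisfies $u_n \rightharpoonup u$ weakly in $X$, and by the continuous embedding just established $\{u_n\}$ is also bounded in $H^{1/2}(\mathbb{R})$. On each bounded interval $B_R = (-R,R)$, the Rellich--Kondrachov theorem for fractional Sobolev spaces (cf.\ Di Nezza et al.~\cite{16}) yields the compact embedding $H^{1/2}(B_R) \hookrightarrow\hookrightarrow L^p(B_R)$ for every finite $p \geq 2$, so along a diagonal subsequence (over $R = k \in \mathbb{N}$) one obtains $u_n \to u$ strongly in $L^p_{\mathrm{loc}}(\mathbb{R})$.

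It then remains to kill the tails uniformly in $n$, and this is exactly where the logarithmic weight does its job. I would estimate
$$\int_{|x|>R} u_n(x)^2 \, dx \;\leq\; \frac{1}{\ln(1+R^2)}\int_{|x|>R} \ln(1+|x|^2)\, u_n(x)^2 \, dx \;\leq\; \frac{\|u_n\|_*^2}{\ln(1+R^2)} \;\leq\; \frac{M^2}{\ln(1+R^2)} \xrightarrow[R\to\infty]{} 0,$$
uniformly in $n$; combined with local $L^2$ convergence this gives $u_n \to u$ in $L^2(\mathbb{R})$. For $p>2$, since $s=\tfrac12$ and $N=1$ the embedding $H^{1/2}(\mathbb{R}) \hookrightarrow L^q(\mathbb{R})$ is continuous for every finite $q\geq 2$, so $\{u_n\}$ is bounded in each such $L^q$. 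Hölder interpolation with $\tfrac{1}{p} = \tfrac{\theta}{2} + \tfrac{1-\theta}{q}$ then furnishes
$$\|u_n\|_{L^p(|x|>R)} \;\leq\; \|u_n\|_{L^2(|x|>R)}^{\theta}\,\|u_n\|_{L^q(|x|>R)}^{1-\theta},$$
which vanishes as $R\to\infty$ uniformly in $n$; together with local $L^p$ convergence this upgrades the convergence to $L^p(\mathbb{R})$.

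The central obstacle --- and indeed the whole raison d'\^etre for working in $X$ rather than $H^{1/2}(\mathbb{R})$ --- is exactly this tail control: a sequence bounded only in $H^{1/2}(\mathbb{R})$ can leak mass to infinity (for instance by translation), so compactness into $L^p(\mathbb{R})$ generally fails; the logarithmic weight is \emph{just barely} strong enough to rule this out, by forcing the $L^2$-mass on $\{|x|>R\}$ to vanish as $R\to\infty$. A minor technical point I would verify is that the interpolation step genuinely reaches every $p \in [2,\infty)$, which is fine because in one dimension with $s=\tfrac12$ one can take $q$ arbitrarily large.
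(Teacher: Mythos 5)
Your proof is correct. The paper itself offers no argument for this lemma --- it is quoted verbatim from Bo\"{e}r--Miyagaki \cite[Lemma 2.1]{miya0} --- and your proof is exactly the standard one for weighted spaces of this kind: continuity is immediate from the definition of the graph norm, local compactness comes from fractional Rellich--Kondrachov (Theorem 7.1 of \cite{16}, which the paper records as Theorem \ref{hitch}), the logarithmic weight kills the tails uniformly via $\int_{|x|>R}u_n^2\,dx\leq \|u_n\|_*^2/\ln(1+R^2)$, and interpolation against the continuous embedding $H^{1/2}(\mathbb{R})\hookrightarrow L^q(\mathbb{R})$, $q<\infty$ (valid since $2s=N=1$ is the critical case), upgrades $L^2$ to $L^p$ for all finite $p\geq 2$. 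One small point worth noting: the paper's displayed formula \eqref{norm2} for $\|u\|_X^2$ omits the Gagliardo seminorm, under which the continuous embedding into $H^{1/2}(\mathbb{R})$ would not follow; your reading of $\|u\|_X^2=\|u\|^2+\|u\|_*^2$ is the intended (and necessary) one.
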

\noindent Below is the well-known Moser-Trudinger lemma (cf.
 {\sc Cao}~\cite{11}).
\begin{lemma}\label{MS_lemma}(cf. 
{\sc Ozawa} \cite{34})
Let $(u_n)$ be a sequence in $L^2(\mathbb{R})$ and $u\in L^2(\mathbb{R})\setminus\{0\}$ such that $u_n\rightarrow u$ pointwise a.e. on $\mathbb{R}$. Moreover, let $(v_n)$ be a bounded sequence in $L^2(\mathbb{R})$ such that \\
$\underset{n\in\mathbb{N}}{\sup} \{A(u_n^2,v_n^2)\}<\infty.$
Then there exist $n_0\in\mathbb{N}$ and $C>0$ such that $\|u_n\|_{*}<C$, for any $n\geq n_0$. Moreover, if
$$A(u_n^2,v_n^2)\rightarrow 0~\text{and}~\|v_n\|_2\rightarrow 0,~\text{as}~n\rightarrow\infty,$$
then $\|v_n\|_{*}\rightarrow 0$ as $n\rightarrow\infty$.
\end{lemma}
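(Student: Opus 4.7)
The strategy is a classical Ozawa-type argument: combine the elementary log-kernel inequality
\[
\ln(1+|x-y|) \;\geq\; \ln(1+|y|) - \ln(1+|x|), \qquad x,y\in\mathbb{R},
\]
which follows from the triangle inequality $1+|y| \leq (1+|x|)(1+|x-y|)$, with a localization step that exploits the non-triviality of the a.e.\ limit $u$.

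\emph{Localization.} Because $u \in L^2(\mathbb{R})\setminus\{0\}$, I would first pick a bounded set $E_{0}\subset[-R,R]$ of positive finite measure and a constant $c_{0}>0$ with $u^{2}\geq c_{0}$ on $E_{0}$. Egoroff's theorem applied to the a.e.\ convergence $u_{n}\to u$ produces a subset $E\subset E_{0}$ with $|E|>0$ on which the convergence is uniform; in particular, there exists $n_{0}$ such that $u_{n}^{2}\geq c_{0}/2$ on $E$ for every $n\geq n_{0}$. Fatou's lemma also yields $\liminf\|u_{n}\|_{2}\geq \|u\|_{2}>0$, so $\|u_{n}\|_{2}$ is bounded away from zero for large $n$ — a fact I will need when transferring the estimate to the first slot of $A$.

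\emph{Main estimate.} For $x\in E$, $\ln(1+|x|)\leq \ln(1+R)$, so the log-inequality gives $\ln(1+|x-y|)\geq \ln(1+|y|)-\ln(1+R)$. Restricting the bilinear form $A(u_{n}^{2},v_{n}^{2})$ to $E\times\mathbb{R}$ and using the pointwise lower bound $u_{n}^{2}\geq c_{0}/2$ on $E$ produces
\[
A(u_{n}^{2},v_{n}^{2}) \;\geq\; \frac{c_{0}|E|}{2}\left(\int_{\mathbb{R}}\ln(1+|y|)\,v_{n}^{2}(y)\,dy \;-\; \ln(1+R)\,\|v_{n}\|_{2}^{2}\right).
\]
With $\sup_{n}A(u_{n}^{2},v_{n}^{2})<\infty$ and $v_{n}$ bounded in $L^{2}$, this rearranges to a uniform control of $\int \ln(1+|y|)v_{n}^{2}(y)\,dy$; since $\ln(1+|y|)\asymp \ln(1+|y|^{2})$, this is a uniform weighted $L^{2}$-estimate on that slot. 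Next I would invoke the dual form of the log-inequality $\ln(1+|x-y|)\geq \ln(1+|x|)-\ln(1+|y|)$ and the symmetry $A(f,g)=A(g,f)$ to re-run the same reduction with the two integration variables interchanged; combining this with the Fatou lower bound on $\|u_{n}\|_{2}$ and the $L^{2}$-boundedness of $v_{n}$ then transfers the control to the first slot and delivers the stated uniform bound $\|u_{n}\|_{*}<C$ for every $n\geq n_{0}$. For the second conclusion, the displayed inequality shows that $\int \ln(1+|y|)v_{n}^{2}\,dy\to 0$ as soon as $A(u_{n}^{2},v_{n}^{2})\to 0$ and $\|v_{n}\|_{2}\to 0$, which upon the same weight comparison gives $\|v_{n}\|_{*}\to 0$.

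\emph{Main obstacle.} The hardest part is that the kernel $\ln(1+|\cdot|)$ is not nonnegative-definite as a quadratic form on $L^{2}(\mathbb{R})\times L^{2}(\mathbb{R})$, so the mere hypothesis $A(u_{n}^{2},v_{n}^{2})<\infty$ carries no pointwise information; the entire argument depends on converting the a.e.\ non-vanishing of $u$ into a \emph{uniform} quantitative lower bound $u_{n}^{2}\geq c_{0}/2$ on a fixed set $E$ valid simultaneously for all $n\geq n_{0}$. The Egoroff step is therefore essential, and the constants $c_{0},|E|,R,n_{0}$ must be frozen before one evaluates the estimate on any individual index $n$, since otherwise the bound becomes $n$-dependent and the uniformity claimed in the lemma is lost. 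A secondary technical point is the comparison $\ln(1+|y|)\asymp \ln(1+|y|^{2})$ needed to pass from the natural weight produced by the kernel estimate to the weight appearing in the definition of $\|\cdot\|_{*}$.
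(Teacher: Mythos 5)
The paper offers no proof of this lemma: it is quoted from the literature (it is essentially Lemma 2.2 of Cingolani--Weth \cite{13}, reproduced in Bo\"er--Miyagaki \cite{miya0}; the attribution to Ozawa looks like a mis-citation), so your proposal can only be judged on its own merits. Your localization step (Egoroff on a finite-measure subset of $\{|u|^2>c_0\}\cap[-R,R]$, yielding $u_n^2\ge c_0/2$ on a fixed set $E$ of positive measure for all $n\ge n_0$) and the main estimate
$$A(u_n^2,v_n^2)\ \ge\ \frac{c_0|E|}{2}\Bigl(\int_{\mathbb{R}}\ln(1+|y|)\,v_n^2\,dy-\ln(1+R)\,\|v_n\|_2^2\Bigr)$$
are exactly the standard argument, and they correctly deliver a uniform bound on $\int\ln(1+|y|)v_n^2\,dy$, hence on $\|v_n\|_*$, as well as the final assertion $\|v_n\|_*\to 0$.

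The genuine gap is the ``transfer to the first slot.'' To re-run your reduction with the integration variables interchanged you would need a fixed set of positive measure on which $v_n^2$ is uniformly bounded below for all large $n$; but $(v_n)$ is only assumed bounded in $L^2$, with no pointwise convergence to a nonzero limit, so no such set exists. The alternative route you hint at, $\ln(1+|x-y|)\ge\ln(1+|x|)-\ln(1+|y|)$, gives $\|v_n\|_2^2\int\ln(1+|x|)u_n^2\,dx\le A(u_n^2,v_n^2)+\|u_n\|_2^2\int\ln(1+|y|)v_n^2\,dy$, which is useless unless $\|v_n\|_2$ is bounded \emph{below} (it is not assumed to be, and in the second half of the lemma it tends to $0$) and $\|u_n\|_2$ is bounded above (also not assumed; Fatou only bounds it below). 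In fact the conclusion ``$\|u_n\|_*<C$'' cannot be proved because it is false as literally stated: take $v_n\equiv 0$ and $u_n=u$ a fixed $L^2$ function with $\int\ln(1+|x|^2)u^2\,dx=\infty$; all hypotheses hold and $\|u_n\|_*=\infty$. The statement is a typo for ``$\|v_n\|_*<C$'' --- this is what the cited sources prove and what the paper actually uses (in the proof of Lemma 4.1 it is applied once with $v_n=\tilde u_n$ and once with $v_n=\tilde u_n-u$). Your main estimate already proves the corrected statement; the transfer step should simply be deleted rather than repaired.
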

Based on Lemma \ref{MT},
 {\sc do \'{O} et al.}~\cite{miya1}
  proved the following lemma.
\begin{lemma}\label{miya_squassina}(cf.
 {\sc do \'{O} et al.}~\cite[Proposition $2.1$]{miya1})
For any $\beta>0$, $u\in W^{\frac{1}{2}}(\mathbb{R})$,\\
$$\int_{\mathbb{R}}(e^{\beta u^2}-1)dx<\infty.$$
\end{lemma}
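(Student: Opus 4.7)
The plan is to reduce to the Moser--Trudinger inequality of Lemma~\ref{MT} by a combination of density and rescaling. Fix $u\in H^{1/2}(\mathbb{R})$ and $\beta>0$. Since $C_c^\infty(\mathbb{R})$ is dense in $H^{1/2}(\mathbb{R})$, I would pick $\phi\in C_c^\infty(\mathbb{R})$ such that the remainder $v:=u-\phi$ satisfies $\|(-\Delta)^{1/4}v\|_2^2 < \omega/(4\beta)$, where $\omega\in(0,\pi]$ is the constant furnished by Lemma~\ref{MT}. The choice is dictated by the scaling argument in the last paragraph.

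Using the elementary bound $u^2\le 2\phi^2+2v^2$, one has
\begin{equation*}
e^{\beta u^2}-1 \;\le\; e^{2\beta\phi^2}\bigl(e^{2\beta v^2}-1\bigr) + \bigl(e^{2\beta\phi^2}-1\bigr).
\end{equation*}
The second term is trivially integrable: $\phi$ is bounded with compact support, so $e^{2\beta\phi^2}-1$ vanishes outside $\mathrm{supp}(\phi)$ and is dominated by $e^{2\beta\|\phi\|_\infty^2}$ on $\mathrm{supp}(\phi)$. In the first term, the prefactor satisfies $\|e^{2\beta\phi^2}\|_\infty \le e^{2\beta\|\phi\|_\infty^2}<\infty$, so it suffices to show $\int_{\mathbb{R}}(e^{2\beta v^2}-1)\,dx<\infty$.

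This last estimate is where the rescaling enters, and it is the main technical point of the argument. Set $\alpha:=1/\|(-\Delta)^{1/4}v\|_2$ so that $w:=\alpha v$ has $\|(-\Delta)^{1/4}w\|_2=1$. Apply Lemma~\ref{MT} with any fixed exponent $\beta'\in(0,\omega)$, say $\beta'=\omega/2$, to obtain
\begin{equation*}
\int_{\mathbb{R}}\bigl(e^{\beta' w^2}-1\bigr)\,dx \;\le\; C_{\beta'}\|w\|_2^2<\infty.
\end{equation*}
Unscaling via $w=\alpha v$ converts this into $\int_{\mathbb{R}}(e^{(\beta'\alpha^2)v^2}-1)\,dx<\infty$. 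By the choice of $\phi$, $\beta'\alpha^2 = (\omega/2)/\|(-\Delta)^{1/4}v\|_2^2 > 2\beta$, so $\int_{\mathbb{R}}(e^{2\beta v^2}-1)\,dx$ is controlled by the finite quantity above.

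The principal obstacle is that Lemma~\ref{MT} only supplies the inequality when $\beta<\omega$ and $\|(-\Delta)^{1/4}u\|_2\le 1$, whereas here $\beta$ and $u$ are arbitrary. The trick is to absorb the largeness of $\beta$ into smallness of the seminorm of the remainder $v$: density lets us shrink $\|(-\Delta)^{1/4}v\|_2$ as much as needed, while the mass of $u$ is transferred to the bounded compactly supported piece $\phi$, where exponential integrability is automatic. This decoupling makes the full statement follow from the subcritical case of Moser--Trudinger.
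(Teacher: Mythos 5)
Your argument is correct. Note, however, that the paper does not actually prove Lemma~\ref{miya_squassina}: it is stated with a bare citation to do~\'O--Miyagaki--Squassina \cite[Proposition~2.1]{miya1}, so there is no in-paper proof to compare against. What you have written is, in substance, the standard proof of that cited proposition: approximate $u$ in $H^{1/2}(\mathbb{R})$ by $\phi\in C_c^\infty(\mathbb{R})$ so that the remainder $v=u-\phi$ has $\|(-\Delta)^{1/4}v\|_2^2<\omega/(4\beta)$, split $e^{\beta u^2}-1$ via $u^2\le 2\phi^2+2v^2$ (your pointwise identity $e^{a+b}-1=e^{a}(e^{b}-1)+(e^{a}-1)$ is exact, so the splitting is clean), dispose of the compactly supported bounded piece trivially, and handle the small remainder by normalizing $w=v/\|(-\Delta)^{1/4}v\|_2$ and invoking Lemma~\ref{MT} at a fixed subcritical exponent $\beta'=\omega/2$; the inequality $\beta'\|(-\Delta)^{1/4}v\|_2^{-2}>2\beta$ then gives the required domination. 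This yields a self-contained derivation of the lemma from Lemma~\ref{MT} rather than an appeal to the literature, which is a genuine improvement in the exposition. The only loose end is the degenerate case $\|(-\Delta)^{1/4}v\|_2=0$, where $\alpha$ is undefined; but then $v$ is constant and, being in $L^2(\mathbb{R})$, identically zero, so the remainder term vanishes and the conclusion is immediate. With that one-line remark added, the proof is complete.
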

Hence, from Lemma \ref{miya_squassina} and equation \eqref{conseq1}, for any $u\in X$ we have
\begin{align}\label{expo_est_1}
\int_{\mathbb{R}}|F(u)|&\leq\frac{\epsilon}{2}\|u\|^2+c_2\|u\|_{\rho_2q}^q\left(\int_{\mathbb{R}}(e^{\rho_1\beta u^2}-1)dx\right)^{\frac{1}{\rho_1}}<\infty.
\end{align}
The following are some useful lemmas which will be used in the paper.
\begin{lemma}\label{useful_lem_1}(cf. 
{\sc Cingolani-Weth} \cite[Lemma $2.6$]{13})
Let $(u_n)$, $(v_n)$, and $(w_n)$ be bounded sequences in $X$ such that $u_n\rightharpoonup u$ in $X$. Then for every $z\in X$, we have $A(v_n w_n,z\cdot(u_n-u))\rightarrow 0$ as $n\rightarrow\infty$.
\end{lemma}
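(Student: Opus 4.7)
The plan is to apply the pointwise logarithmic splitting $\ln(1+|x-y|)\leq \ln(1+|x|)+\ln(1+|y|)$ from \eqref{log_fun_prop} inside the double integral defining $A(v_n w_n, z(u_n-u))$, and then use Fubini to factor the resulting bound into products of one-dimensional integrals. In each such product one factor will be controlled by boundedness of $\|v_n\|_X,\|w_n\|_X$, while the other will be made to vanish via the compact embedding $X\hookrightarrow L^2(\mathbb{R})$ of Lemma \ref{comp_emb} or via a tail estimate in a weighted norm.

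Concretely, the splitting yields
\begin{align*}
|A(v_n w_n, z(u_n-u))| &\leq \Bigl(\int_{\mathbb{R}} \ln(1+|x|)\,|v_n w_n|\,dx\Bigr)\Bigl(\int_{\mathbb{R}} |z(u_n-u)|\,dy\Bigr) \\
&\quad + \Bigl(\int_{\mathbb{R}} |v_n w_n|\,dx\Bigr)\Bigl(\int_{\mathbb{R}} \ln(1+|y|)\,|z(u_n-u)|\,dy\Bigr) =: I_1^{(n)}+I_2^{(n)}.
\end{align*}
For $I_1^{(n)}$, Cauchy--Schwarz combined with the elementary comparison $\ln(1+|x|)\leq C(1+\ln(1+|x|^2))$ gives $\int \ln(1+|x|)|v_n w_n|\,dx \leq C\|v_n\|_X\|w_n\|_X$, which is uniformly bounded, while $\int |z(u_n-u)|\,dy \leq \|z\|_2\|u_n-u\|_2\to 0$ by Lemma \ref{comp_emb}. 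Hence $I_1^{(n)}\to 0$.

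The main obstacle is $I_2^{(n)}$: its first factor is bounded by $\|v_n\|_2\|w_n\|_2$, but the second factor involves the unbounded weight $\ln(1+|y|)$ and is not directly controlled by $L^2$ convergence. To handle it I would perform a tail truncation: for $R>0$, split $\mathbb{R}=\{|y|\leq R\}\cup\{|y|>R\}$. On the ball, the weight is bounded by $\ln(1+R)$, so the corresponding piece tends to $0$ as $n\to\infty$ by compact embedding into $L^2$. On the exterior, a weighted Cauchy--Schwarz yields
\begin{align*}
\int_{|y|>R}\ln(1+|y|)\,|z(u_n-u)|\,dy \leq \Bigl(\int_{|y|>R}\ln(1+|y|)\,z^2\,dy\Bigr)^{1/2}\Bigl(\int_{|y|>R}\ln(1+|y|)(u_n-u)^2\,dy\Bigr)^{1/2}.
\end{align*}
The second factor is uniformly bounded in $n$ (using again $\ln(1+|y|)\leq C(1+\ln(1+|y|^2))$ together with the boundedness of $\|u_n\|_X$ guaranteed by weak convergence), while the first tends to $0$ as $R\to\infty$ since $z\in X$ forces $\ln(1+|y|)z^2\in L^1(\mathbb{R})$. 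A standard $\varepsilon$--$R$--$n$ argument completes the proof.

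I expect the key difficulty to be precisely this treatment of $I_2^{(n)}$: the symmetric estimate \eqref{cont_A} only supplies a $\|z\|_*\|u_n-u\|_*$-type bound in its second summand, and $\|u_n-u\|_*$ need not vanish under mere weak convergence in $X$. Shifting the unbounded logarithmic weight onto the fixed function $z$ (whose tail decays) through truncation in $y$ is the crucial workaround that makes the argument go through.
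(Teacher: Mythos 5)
The paper does not prove this lemma itself; it only cites Cingolani--Weth \cite[Lemma 2.6]{13}, so there is no in-paper proof to compare against. Your argument is correct and is essentially the standard proof of that cited result: the splitting \eqref{log_fun_prop}, the $L^2$-convergence $u_n\to u$ from the compact embedding of Lemma \ref{comp_emb}, and, for the problematic summand where \eqref{cont_A} would leave the non-vanishing factor $\|u_n-u\|_*$, the $\varepsilon$--$R$ truncation that shifts the weight $\ln(1+|y|)$ onto the fixed $z\in X$, whose weighted tail $\int_{|y|>R}\ln(1+|y|)z^2\,dy$ vanishes as $R\to\infty$. No gaps.
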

\begin{lemma}\label{useful_lem_2}(cf.
 {\sc Felmer et al.} \cite[Lemma $2.2$]{felm1})
\begin{enumerate}[label=(\roman*)]
\item The functionals $U, V, W$ are of class $C^1$ on $X$. In fact, $\langle U'(u),v\rangle=4A(u^2,uv)$, $\langle V'(u),v\rangle=4B(u^2,uv)$, $\langle W'(u),v\rangle=4C(u^2,uv)$, for all $u,v\in X$.
\item $V$ is continuously differentiable on $L^4(\mathbb{R})$.
\item $U$ is a weakly lower semicontinuous functional on $H^{\frac{1}{2}}(\mathbb{R})$.
\item $E$ is lower semicontinuous on $H^{\frac{1}{2}}(\mathbb{R})$. 
\end{enumerate}
\end{lemma}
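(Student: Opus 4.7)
For part (i), the Gâteaux derivatives come out of bilinear/symmetric Taylor expansions of $A$, $B$, $C$. For $U$ and any $u,v \in X$,
\[
U(u+tv) - U(u) = A\bigl((u+tv)^2,(u+tv)^2\bigr) - A(u^2,u^2) = 4t\, A(u^2,uv) + O(t^2),
\]
by symmetry of $A$, so $\langle U'(u),v\rangle = 4A(u^2,uv)$; the identical expansion in $B$, $C$ gives the formulas for $V'$ and $W'$. Inequality \eqref{cont_A} shows $v\mapsto 4A(u^2,uv)$ is bounded linear on $X$, hence $U'(u)\in X^*$. Continuity of $U':X\to X^*$ is then obtained by the algebraic telescope
\[
A(u^2,uv) - A(w^2,wv) = A\bigl((u-w)(u+w),uv\bigr) + A\bigl(w^2,(u-w)v\bigr),
\]
applying \eqref{cont_A} term-by-term and using that $u\mapsto u^2$ is continuous from $X$ into the product spaces appearing in \eqref{cont_A}. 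The same scheme with \eqref{cont_B} in place of \eqref{cont_A} handles $V'$, and $W' = U' - V'$ inherits the $C^1$ property.

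For part (ii), restricting to $L^4$, the Hardy--Littlewood--Sobolev bound \eqref{cont_B} together with H\"older's inequality gives
\[
|\langle V'(u),v\rangle| = 4|B(u^2,uv)| \leq 4 C_0 \|u^2\|_2 \|uv\|_2 \leq 4 C_0 \|u\|_4^3 \|v\|_4,
\]
so $V'(u)\in (L^4)^*$ with $\|V'(u)\|_{(L^4)^*}\leq 4C_0\|u\|_4^3$. Continuity $V':L^4 \to (L^4)^*$ follows from the same telescoping argument, invoking the continuity of the map $u\mapsto u^2$ from $L^4(\mathbb{R})$ into $L^2(\mathbb{R})$.

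For part (iii), suppose $u_n\rightharpoonup u$ in $H^{\frac{1}{2}}(\mathbb{R})$. Along a subsequence, compact embedding (Lemma \ref{comp_emb} on bounded domains, combined with a standard diagonal argument) gives $u_n\to u$ a.e.\ on $\mathbb{R}$, and therefore $u_n^2(x)u_n^2(y) \to u^2(x)u^2(y)$ a.e.\ on $\mathbb{R}\times\mathbb{R}$. Since $\ln(1+|x-y|) \geq 0$ on $\mathbb{R}\times\mathbb{R}$, Fatou's lemma applied to the nonnegative integrand yields $U(u) \leq \liminf_n U(u_n)$.

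For part (iv), I write (using $W = U - V$ and identifying $\tilde E$ with $E$ as in Remark \ref{subseq_rem})
\[
E(u) = \frac{1}{2}\|u\|^2 + \frac{1}{4}U(u) - \frac{1}{4}V(u) - \int_{\mathbb{R}} \tilde G(u)\,dx,
\]
and address the pieces separately: $\|\cdot\|^2$ is weakly l.s.c.\ as a Hilbert norm; $U$ is weakly l.s.c.\ by (iii); $V$ is continuous on $H^{\frac{1}{2}}$ through the embedding $H^{\frac{1}{2}}\hookrightarrow L^4$ and part (ii); and the cutoff term $\int\tilde G(u)\,dx$ is continuous on bounded subsets of $H^{\frac{1}{2}}$ by combining \eqref{sing_est} with the Moser--Trudinger inequality (Lemma \ref{MT}) and the growth estimate \eqref{conseq1}, together with Vitali's convergence theorem to secure equi-integrability of the exponential tails. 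The principal technical obstacle lies in this last step: Moser--Trudinger is available only for $\|(-\Delta)^{1/4}u\|\leq 1$, so one must first normalize and choose $\rho_1,\beta$ with $\rho_1\beta\,\limsup\|(-\Delta)^{1/4}u_n\|^2 < \omega$ along the convergent subsequence, which is feasible because the analysis runs on $H^{\frac{1}{2}}$-bounded sequences.
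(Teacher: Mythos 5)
The paper offers no proof of this lemma at all: it is imported verbatim by citation (attributed to Felmer et al.\ \cite{felm1}, though the statement is really the logarithmic-Choquard toolbox of Cingolani--Weth \cite{13} and Bo\"er--Miyagaki \cite{miya0} adapted to $N=1$). Your proposal therefore cannot match the paper's argument --- there is none --- but as a self-contained reconstruction it is essentially correct and follows the standard route. Part (i): the quadratic expansion $A\bigl((u+tv)^2,(u+tv)^2\bigr)=A(u^2,u^2)+4tA(u^2,uv)+O(t^2)$ and the telescoping identity combined with \eqref{cont_A} is exactly how one gets Fr\'echet differentiability and continuity of $U'$; the same with \eqref{cont_B} gives (ii), and the bound $|B(u^2,uv)|\le C_0\|u\|_4^3\|v\|_4$ is right. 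Part (iii) via a.e.\ convergence along a subsequence plus Fatou on the nonnegative kernel $\ln(1+|x-y|)$ is the standard argument (you should first pass to a subsequence realizing $\liminf_n U(u_n)$, a step you leave implicit). Two points in (iv) deserve care. First, the statement must be read as lower semicontinuity for the \emph{norm} topology of $H^{\frac12}(\mathbb{R})$: your treatment of $-\tfrac14 V$ and of $-\int\tilde G(u)\,dx$ uses continuity, which fails under mere weak convergence because $H^{\frac12}(\mathbb{R})\hookrightarrow L^4(\mathbb{R})$ is not compact on the whole line; under norm convergence your argument closes, and the weak l.s.c.\ of $\|\cdot\|^2$ and $U$ is then simply more than you need. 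Second, the continuity of $u\mapsto\int_{\mathbb{R}}\tilde G(u)\,dx$ genuinely relies on the cutoff structure of $\tilde G$ (which freezes the singular term below $\underline{u}_\mu$) together with the estimate \eqref{sing_est}; without the cutoff the term $\int_{\mathbb{R}}|u|^{1-\gamma}dx$ need not even be finite on $H^{\frac12}(\mathbb{R})$, so your appeal to Remark \ref{subseq_rem} is not cosmetic but essential and should be stated as such.
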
 

\noindent
We now check that the energy functional $E$ depicts the Mountain pass geometry. This will be required to obtain a {\it Cerami sequence} for a certain mountain pass energy level $d$ (given in Theorem \ref{main_res_1}). Following is the definition of a {\it Cerami sequence} pertaining to a $C^1$-functional.
\begin{definition}\label{cerami_seq}
({\sc Soni-Choudhuri} \cite[Definition $2.1$]{soni})
Let $\Phi:Y\rightarrow\mathbb{R}$ be a $C^1$-functional, where $Y$ is a normed linear space with the norm $\|\cdot\|_Y$. Then $\Phi$ is said to satisfy the {\it Cerami} condition at a level $c\in\mathbb{R}$ if any sequence $(u_n)\subset Y$ such that $\Phi(u_n)\rightarrow c$ and $(1+\|u_n\|_Y)\Phi'(u_n)\rightarrow 0$ as $n\rightarrow\infty$ has a convergent subsequence in $Y$.
\end{definition}
\begin{remark}\label{ps_importance}\noindent The advantage of this condition is that a Cerami sequence can produce a critical point even when a Palais-Smale sequence does not (cf.
 {\sc Schechter} \cite{martin}).
\end{remark}
\begin{lemma}\label{mp_geo}
There exist sufficiently small $R>0$ and $\mu_0>0$ such that 
$$m_r=\inf\{E(u):u\in X, \|u\|=r\}>0,~\text{for every}~r\in (0,R]$$ and $$m'_r=\inf\{\langle E'(u),u\rangle:u\in X, \|u\|=r\}>0,~\text{for every}~r\in (0,R]$$ whenever $\mu\in(0,\mu_0)$.
\end{lemma}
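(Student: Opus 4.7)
The plan is to prove both estimates simultaneously by controlling the three pieces of the cutoff functional $E(u) = \tfrac{1}{2}\|u\|^2 + W(u) - \int_{\mathbb{R}} \tilde{G}(u)\,dx$ on the small sphere $\{u \in X : \|u\|=r\}$ and forcing the leading quadratic term to dominate. First I would restrict attention to $u \in X$ with $\|u\| \leq R \leq 1$, which guarantees $\|(-\Delta)^{1/4}u\|_2 \leq \|u\| \leq 1$ and thereby activates the Moser--Trudinger estimate of Lemma \ref{MT}. Combined with the pointwise bound $\tilde{G}(u) \leq C'(e^{\beta u^2}-1)$ underlying \eqref{sing_est}, this yields
$$\int_{\mathbb{R}} \tilde{G}(u)\,dx \;\leq\; C' C_\beta\|u\|_2^2 \;\leq\; C''(\mu)\,\|u\|^2,$$
where $C''(\mu)$ inherits its dependence on $\mu$ through the auxiliary sub-solution $\underline{u}_\mu$ of \eqref{auxprob}. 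Since $\underline{u}_\mu \to 0$ in $X$ as $\mu \to 0^+$, one has $C''(\mu) \to 0$, and one can fix $\mu_0 > 0$ so that $C''(\mu) \leq 1/4$ whenever $\mu \in (0, \mu_0)$.

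Second I would handle the Choquard logarithmic term $W(u) = A(u^2,u^2)-B(u^2,u^2)$. Since $\ln(1+|x-y|) \geq 0$, the nonnegative piece $A(u^2,u^2)$ can be discarded, giving
$$W(u) \;\geq\; -B(u^2,u^2) \;=\; -V(u) \;\geq\; -C_0\|u\|_4^4 \;\geq\; -C_1\|u\|^4,$$
where \eqref{cont_V} is used in the middle step and the continuous embedding $X \hookrightarrow H^{1/2}(\mathbb{R}) \hookrightarrow L^4(\mathbb{R})$ from Lemma \ref{comp_emb} in the last. Assembling the three pieces,
$$E(u) \;\geq\; \bigl(\tfrac{1}{2} - C''(\mu)\bigr)\|u\|^2 - C_1\|u\|^4 \;\geq\; \tfrac{1}{4}\|u\|^2 - C_1\|u\|^4,$$
which is strictly positive on $\|u\|=r$ after additionally requiring $R^2 < 1/(8C_1)$; fixing any such $R$ then yields $m_r \geq r^2/8 > 0$ for every $r \in (0,R]$.

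Third I would derive the lower bound on $m'_r$ in a parallel fashion. Writing $\langle E'(u),u\rangle = \|u\|^2 + \langle W'(u),u\rangle - \int_{\mathbb{R}} \tilde{g}(u)u\,dx$, Lemma \ref{useful_lem_2}(i) provides $\langle W'(u),u\rangle = 4W(u)$, while the analogue of the derivation leading to \eqref{sing_est} (now combining the inequality \eqref{conseq2} for $f$ with the singular contribution $\mu(1-\gamma)|u|^{1-\gamma}$) gives
$$\int_{\mathbb{R}}|\tilde{g}(u)u|\,dx \;\leq\; \widehat{C}(\mu)\|u\|^2 + C_2\|u\|^q,$$
with $\widehat{C}(\mu) \to 0$ as $\mu \to 0^+$ and $q > 4$. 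The same convex-combination argument then forces $\langle E'(u),u\rangle \geq \tfrac{1}{8}\|u\|^2 > 0$ on $\|u\|=r$, after possibly shrinking $R$ and $\mu_0$ once more so that the $\|u\|^4$ and $\|u\|^q$ remainders are absorbed.

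The main obstacle is the very first step: rigorously quantifying the $\mu$-dependence of the constant $C'(\mu)$ in the pointwise bound $\tilde{G}(u) \leq C'(\mu)(e^{\beta u^2}-1)$. This hinges on showing that the sub-solution $\underline{u}_\mu$ of \eqref{auxprob} vanishes in $X$-norm as $\mu \to 0^+$, since only then can $C''(\mu)$ be driven strictly below $1/2$ and the quadratic barrier $\tfrac{1}{2}\|u\|^2$ protected from being swallowed by the singular/exponential nonlinearity. Once this vanishing is in hand, the remainder is a routine combination of the Choquard estimate \eqref{cont_V}, the Sobolev embedding of Lemma \ref{comp_emb}, and the Moser--Trudinger inequality of Lemma \ref{MT}.
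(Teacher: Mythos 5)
Your overall strategy is the same as the paper's: make the quadratic term $\tfrac12\|u\|^2$ dominate on a small sphere, discard the nonnegative part $A(u^2,u^2)$ of $W$ and bound $-V(u)\ge -C_0\|u\|_4^4\ge -C_1\|u\|^4$ via \eqref{cont_V} and Lemma \ref{comp_emb}, and control the nonlinearity with the Moser--Trudinger inequality. The genuine gap is in the one step you yourself flag as the main obstacle: the claim that the quadratic coefficient $C''(\mu)$ in $\int_{\mathbb{R}}\tilde G(u)\,dx\le C''(\mu)\|u\|^2$ can be driven below $1/4$ by shrinking $\mu$. This fails on two counts. First, $\tilde G$ contains $F$, whose contribution to the pointwise constant $C'$ in $\tilde G(u)\le C'(e^{\beta u^2}-1)$ is independent of $\mu$, and nothing in $(A_1)$--$(A_4)$ makes it small; on the contrary, $(A_4)$ requires $C_q$ (hence $F$) to be large. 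Second, for the singular part the comparison constant near the cutoff level behaves like $\mu/(\beta\,\underline{u}_{\mu}^{1+\gamma})$, and since $\underline{u}_{\mu}$ itself shrinks with $\mu$ (from \eqref{auxprob} one expects $\underline{u}_{\mu}\sim\mu^{1/(1+\gamma)}$), this quantity is $O(1)$, not $o(1)$; your heuristic that $\underline{u}_{\mu}\to 0$ helps actually works against you, since a smaller $\underline{u}_{\mu}$ makes the ratio $\tilde G(t)/(e^{\beta t^2}-1)$ near $t=\underline{u}_{\mu}$ larger. So the asserted limit $C''(\mu)\to 0$ is unsupported and most likely false, and with it the inequality $E(u)\ge\bigl(\tfrac12-C''(\mu)\bigr)\|u\|^2-C_1\|u\|^4$ loses its positive leading coefficient.

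The repair is what the paper does: do not lump $F$ into the crude bound \eqref{sing_est}. Use $(A_2)$ through \eqref{conseq1} and \eqref{expo_est} to split $\int_{\mathbb{R}}F(u)\,dx\le\tfrac{\epsilon}{2}\|u\|^2+C_3\|u\|^{q}$ with $\epsilon>0$ arbitrarily small (independently of $\mu$) and $q>2$, so that the $F$-contribution to the quadratic coefficient is $\epsilon$ and the remainder is higher order in $r$; and keep the explicit prefactor $\mu$ on the singular term, $\tfrac{\mu}{1-\gamma}\int_{\mathbb{R}}|u|^{1-\gamma}dx\le\tfrac{C_4\mu}{2(1-\gamma)}\|u\|^2$, so that its smallness comes from taking $\mu<\mu_0$ rather than from any limit of the comparison constant. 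This yields the paper's estimate
\begin{align*}
E(u)&\ge\frac12\|u\|^2\left((1-\epsilon)-C_2\|u\|^2-C_3\|u\|^{q-2}-\frac{C_4\mu}{1-\gamma}\right),
\end{align*}
and the analogous one for $\langle E'(u),u\rangle$, from which both conclusions follow for sufficiently small $R$ and $\mu_0$. The rest of your argument (the treatment of $W$, the $\|u\|^4$ bookkeeping, and the parallel derivation for $m_r'$) is consistent with the paper.
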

\begin{proof}
Let $u\in X\setminus\{0\}$ and such that $u>\underline{u}_{\lambda}$ a.e. in $\Omega$. Choose $\rho_1, \rho_2>1$ such that $\rho_1\sim 1$, $\rho_2>2$ and
$\rho_1\beta\|u\|^2$ in order to apply the exponential estimates in \eqref{expo_est}. Furthermore, on using the Sobolev embeddings, we get
\begin{align}\label{est1}
\begin{split}
E(u)&=\frac{1}{2}\|u\|^2+\frac{1}{4}W(u)-\int_{\mathbb{R}}F(u)dx-\frac{\mu}{1-\gamma}\int_{\mathbb{R}}|u|^{1-\gamma}dx\\
&\geq \frac{1}{2}\|u\|^{2}\left((1-\epsilon)-C_2\|u\|^{2}-C_3\|u\|^{q-2}-\frac{C_4\mu}{1-\gamma}\right).
\end{split}
\end{align}
Thus, for a pair of sufficiently small positive numbers $(\mu_0,R)$, we get $E(u)>0$, whenever $\|u\|=r<R$ and $\mu\in(0,\mu_0)$. Similarly, 
\begin{align}\label{est2}
\begin{split}
\langle E'(u),u\rangle&=\|u\|^2+W(u)-\int_{\mathbb{R}}f(u)udx-\mu\int_{\mathbb{R}}|u|^{1-\gamma}dx\\
&\geq \|u\|^{2}\left((1-\epsilon)-C_4\|u\|^{2}-C_5\|u\|^{q-2}-C_4\mu\right).
\end{split}
\end{align}
Once again, by choosing $R>0$ and $\mu_0$ sufficiently small we obtain $m_r'>0$. This completes the proof.
\end{proof}
\begin{lemma}\label{useful_lemma1}
Let $u\in X\setminus\{0\}$ and $q>4$. Then 
$$\underset{t\rightarrow 0}{\lim}{E(tu)}=0,~\underset{t>0}{\sup}\{E(tu)\}<\infty,~E(tu)\rightarrow-\infty~\text{as}~t\rightarrow\infty.$$
\end{lemma}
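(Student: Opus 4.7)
My plan is to write $E(tu)$ explicitly and analyze each of its four terms by its scaling in $t$. Using the bilinearity of $C$, we have $W(tu) = C((tu)^2,(tu)^2) = t^4 W(u)$, while the singular term scales as $t^{1-\gamma}$. Hence
\begin{align*}
E(tu) = \frac{t^2}{2}\|u\|^2 + \frac{t^4}{4}W(u) - \int_{\mathbb{R}} F(tu)\,dx - \frac{\mu\, t^{1-\gamma}}{1-\gamma}\int_{\mathbb{R}}|u|^{1-\gamma}\,dx.
\end{align*}
Note that each term is finite: $u\in X$ gives $W(u)\in\mathbb{R}$ by Lemma \ref{useful_lem_2}(i); the singular integral is finite because $X$ embeds into $L^p(\mathbb{R})$ for every $p\geq 2$ by Lemma \ref{comp_emb}, from which one controls $\|u\|_{1-\gamma}^{1-\gamma}$ by interpolation; and $\int F(tu)\,dx$ is finite by \eqref{expo_est_1}.

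For $\lim_{t\to 0}E(tu)=0$, the first, second and fourth displayed terms vanish as $t^2$, $t^4$ and $t^{1-\gamma}$ respectively (all positive exponents, since $\gamma<1$). For the nonlinearity I would apply the pointwise bound \eqref{conseq1} with a fixed $\epsilon$, giving $|F(tu)| \leq \tfrac{\epsilon}{2} t^2 u^2 + c_2 t^q |u|^q (e^{\beta t^2 u^2}-1)$. Integrating and applying Hölder with exponents $\rho_1,\rho_2$ as in \eqref{expo_est}, the first piece is $O(t^2)$, and the second is bounded by $c_2 t^q \|u\|_{\rho_2 q}^q \bigl(\int_{\mathbb{R}}(e^{\rho_1\beta t^2 u^2}-1)\,dx\bigr)^{1/\rho_1}$, where the exponential integral stays bounded (indeed tends to $0$ by dominated convergence, since $\rho_1\beta t^2 u^2 \to 0$ pointwise and $u\in X$ together with Lemma \ref{miya_squassina} supplies a dominating function). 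Thus $\int|F(tu)|\,dx = O(t^2)+o(t^q) \to 0$.

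For $\lim_{t\to\infty}E(tu)=-\infty$, I would invoke the superquartic lower bound $F(t)\geq C_q|t|^q$ from $(A_4)$, giving
\begin{align*}
E(tu) \leq \frac{t^2}{2}\|u\|^2 + \frac{t^4}{4}W(u) - C_q t^q \int_{\mathbb{R}}|u|^q\,dx,
\end{align*}
where $\int_{\mathbb{R}}|u|^q\,dx>0$ since $u\neq 0$ and $X\hookrightarrow L^q(\mathbb{R})$ by Lemma \ref{comp_emb}. Since $q>4$, the negative $-C_q t^q$ term dominates both the $O(t^4)$ and $O(t^2)$ contributions (and the $t^{1-\gamma}$ term that was dropped is itself already negative), so $E(tu)\to-\infty$.

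Finally, $\sup_{t>0}E(tu)<\infty$ follows from continuity of $t\mapsto E(tu)$ on $(0,\infty)$ together with the two limits above: $E(tu)$ extends continuously to $t=0$ with value $0$, is bounded on any compact subinterval, and is eventually negative for large $t$, so its supremum is finite. The main obstacle is conceptual rather than technical: $W(u)$ need not be negative (the logarithmic kernel $\ln|\cdot|$ has no definite sign, and $A(u^2,u^2)$ can dominate $B(u^2,u^2)$), so the quartic term $\tfrac{t^4}{4}W(u)$ does not by itself push $E(tu)$ down. This is precisely why hypothesis $(A_4)$ demands $q>4$: the polynomial floor $F(t)\geq C_q|t|^q$ with $q>4$ is exactly the growth needed to beat the worst-case Choquard contribution.
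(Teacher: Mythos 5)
Your proposal is correct and follows essentially the same route as the paper: the key step is identical, namely expanding $E(tu)$ by homogeneity and applying the bound $F(tu)\ge C_q t^q|u|^q$ from $(A_4)$ so that the $-C_q t^q\|u\|_q^q$ term dominates the quadratic and quartic terms for $q>4$, exactly as in \eqref{est3}. The paper dismisses the $t\to 0$ limit and the finiteness of the supremum as ``easy to see,'' whereas you supply the details via \eqref{conseq1}, H\"older, and dominated convergence; the only shaky point is your parenthetical claim that $\int_{\mathbb{R}}|u|^{1-\gamma}\,dx$ is controlled ``by interpolation'' from the $L^p$, $p\ge 2$, embeddings (on the unbounded domain $\mathbb{R}$ one cannot interpolate below the smallest exponent), but the paper itself silently assumes this integral is finite throughout, so this is inherited from the setup rather than a flaw specific to your argument.
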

\begin{proof}
Suppose that $u\in X\setminus\{0\}$. By $(A_4)$ and $q>4$ we get
\begin{align}\label{est3}
\begin{split}
E(tu)&=\frac{t^2}{2}\|u\|^2+\frac{t^4}{4}W(u)-\int_{\mathbb{R}}F(tu)dx-\frac{t^{1-\gamma}\mu}{1-\gamma}\int_{\mathbb{R}}|u|^{1-\gamma}dx\\
&\leq\frac{t^2}{2}\|u\|^2+\frac{t^4}{4}W(u)-C_qt^q\int_{\mathbb{R}}|u|^qdx-\frac{t^{1-\gamma}\mu}{1-\gamma}\int_{\mathbb{R}}|u|^{1-\gamma}dx\\ &\leq\frac{t^2}{2}\|u\|^2+\frac{t^4}{4}W(u)-C_qt^q\|u\|_q^q-\frac{t^{1-\gamma}\mu}{1-\gamma}\int_{\mathbb{R}}|u|^{1-\gamma}dx\rightarrow-\infty~\text{as}~t\rightarrow\infty.
\end{split}
\end{align}
Also, it is easy to see that $\underset{t\rightarrow 0}{\lim}{E(tu)}=0,~\underset{t>0}{\sup}\{E(tu)\}<\infty$.
\end{proof}
\begin{remark}\label{cerami_rem}
It is easy to verify invoking Lemma \ref{useful_lemma1} and the intermediate value theorem that $0<m_R\leq d<\infty$. Since, the functional $E$ has mountain pass geometry, by
 {\sc Du-Weth} \cite[Lemma $3.2$]{21}, 
 there exists a sequence $(u_n)\subset X$ such that \begin{eqnarray}\label{cerami}E(u_n)\rightarrow d~\text{and}~ \|E'(u_n)\|_{X'}(1+\|u_n\|_X)\rightarrow 0,~\text{as}~n\rightarrow\infty.\end{eqnarray}
\end{remark}
\begin{lemma}\label{Cerami_bdd}
Suppose that the sequence $(u_n)\subset X$ satisfies the Cerami condition \eqref{cerami}. Then $(u_n)$ is bounded in $H^{\frac{1}{2}}(\mathbb{R})$.
\end{lemma}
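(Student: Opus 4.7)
The plan is to argue by contradiction: assume, toward a contradiction, that $\|u_n\|\to\infty$ in $H^{\frac{1}{2}}(\mathbb{R})$. From the Cerami condition \eqref{cerami} I get $E(u_n)=d+o(1)$, and since
\[
|\langle E'(u_n),u_n\rangle|\leq \|E'(u_n)\|_{X'}\|u_n\|_X\leq (1+\|u_n\|_X)\|E'(u_n)\|_{X'}\to 0,
\]
any affine combination of $E(u_n)$ and $\langle E'(u_n),u_n\rangle$ stays bounded.

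The crucial step is to form the combination
\[
E(u_n)-\frac{1}{4}\langle E'(u_n),u_n\rangle \;=\; \frac{1}{4}\|u_n\|^2 + \int_{\mathbb{R}}\left(\frac{1}{4}\tilde{g}(u_n)u_n-\tilde{G}(u_n)\right)dx,
\]
in which the multiplier $\frac{1}{4}$ is chosen precisely so that the quartic Choquard term $W(u_n)$, which has no fixed sign, cancels exactly (using $\langle W'(u),u\rangle=4W(u)$ from Lemma \ref{useful_lem_2}). On the region $\{|u_n|>\underline{u}_\mu\}$, hypothesis $(A_3)$ with $L>4$ gives
\[
\frac{1}{4}f(u_n)u_n-F(u_n)\geq \left(\frac{1}{4}-\frac{1}{L}\right)f(u_n)u_n\geq 0,
\]
so the superlinear part contributes a nonnegative remainder. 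The remaining singular part produces $-\frac{3+\gamma}{4}\mu\int|u_n|^{1-\gamma}dx$, plus a manageable constant contribution on $\{|u_n|\leq\underline{u}_\mu\}$, both of which must be absorbed.

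To close the estimate I will control $\int|u_n|^{1-\gamma}dx$ using Chebyshev and H\"older: $|\{|u_n|>\underline{u}_\mu\}|\leq\underline{u}_\mu^{-2}\|u_n\|_2^2$, and H\"older with exponents $2/(1-\gamma)$ and $2/(1+\gamma)$ gives $\int|u_n|^{1-\gamma}dx\leq C(\underline{u}_\mu)\|u_n\|^2$; equivalently, I can invoke the estimate $\int\tilde{G}(u_n)\,dx\leq C'\|u_n\|^2$ from \eqref{sing_est}. Assembling the pieces yields an inequality of the form
\[
\frac{1}{4}\|u_n\|^2\leq d+o(1)+C\mu\|u_n\|^2,
\]
with $C$ independent of $n$. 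Shrinking $\mu_0$ from Lemma \ref{mp_geo} if necessary so that $\mu_0<(4C)^{-1}$, the coefficient $\frac{1}{4}-C\mu$ is strictly positive, forcing $(\|u_n\|)_n$ to be bounded and contradicting the assumption.

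The main obstacle I expect is handling the logarithmic nonlocal term $W(u_n)=U(u_n)-V(u_n)$: it has no definite sign and is not controlled by Sobolev embeddings into the $H^{\frac{1}{2}}$ norm alone. The trick of multiplying $\langle E'(u_n),u_n\rangle$ by the value $\frac{1}{4}$ dictated by the quartic homogeneity of $W$ is what makes it drop out of the bookkeeping. A secondary technical nuisance is the constant piece in $\tilde{G}$ on $\{|u_n|\leq\underline{u}_\mu\}$, which must be shown to contribute only a controlled error; the level-set estimate above together with the embedding $X\hookrightarrow L^2(\mathbb{R})$ from Lemma \ref{comp_emb} is what makes this transparent.
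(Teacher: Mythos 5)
Your proposal is correct and follows essentially the same route as the paper: both form $E(u_n)-\tfrac{1}{4}\langle E'(u_n),u_n\rangle$ so that the quartic term $W$ cancels, use $(A_3)$ to make the $f$-contribution nonnegative, absorb the singular term via the bound $\int\tilde{G}(u_n)\,dx\leq C'\|u_n\|^2$ from \eqref{sing_est}, and conclude by taking $\mu$ small. Your treatment is in fact slightly cleaner, since you only need nonnegativity of the $(A_3)$ remainder rather than the paper's additional $(A_4)$ lower bound.
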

\begin{proof}
Using \eqref{sing_est}, the condition in \eqref{cerami}, $(A_3)$, and the embedding of $H^{\frac{1}{2}}(\mathbb{R})\hookrightarrow L^q(\mathbb{R})$ for $q\in[1,\infty)$ from Lemma \ref{comp_emb}, we obtain 
\begin{align}\label{est4}
\begin{split}
d+o(1)&\geq E(u_n)-\frac{1}{4}\langle E'(u_n),u_n \rangle\\
&\geq \frac{1}{4}\|u_n\|^2+\left(\frac{L}{4}-1\right)\int_{\mathbb{R}}F(u_n)dx-\mu\frac{3+\gamma}{4(1-\gamma)}\int_{\mathbb{R}}|u|^{1-\gamma}dx\\
&\geq\frac{1}{4}\|u_n\|^2+C_6\left(\frac{L}{4}-1\right)\|u_n\|^q-\mu\frac{3+\gamma}{4(1-\gamma)}\int_{\mathbb{R}}|u|^{1-\gamma}dx\\
&\geq\frac{1}{4}\|u_n\|^2+C_6\left(\frac{L}{4}-1\right)\|u_n\|^q-C'\mu\frac{3+\gamma}{4(1-\gamma)}\|u_n\|^{2}.
\end{split}
\end{align}
The inequality in \eqref{est4} clearly shows that the sequence $(u_n)$ is bounded in $H^{\frac{1}{2}}(\mathbb{R})$. For if not, then on dividing \eqref{est4} by $\|u_n\|^q$ and then passing the limit $n\rightarrow\infty$, yields a contradiction to $0\geq C_6\left(\frac{L}{4}-1\right)$. Thus, for a small range of $\mu$, say $(0,\mu_0)$, we have
\begin{align}\label{est4'}
\begin{split}
d+o(1)\geq\frac{1}{4}\|u_n\|^2.
\end{split}
\end{align}
\end{proof}
\noindent The following lemma shows that any sequence $(u_n)\subset X$ such that $E(u_n)\leq d$ for all $n\in\mathbb{N}$ can be taken to be of sufficiently small norms.
\begin{lemma}\label{bdd_seq}
Let $(u_n)\subset X$ satisfy the Cerami condition in \eqref{cerami} with $q>4$. Then for some sufficiently small $r_0>0$, we have $\underset{n}{\lim\sup}\|u_n\|^2<r_0^2$.
\end{lemma}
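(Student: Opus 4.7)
The plan is to prove the sharp upper bound $d < r_0^2/4$ for the mountain pass level: once this is established, Lemma \ref{Cerami_bdd} (which gives $d + o(1) \geq \tfrac{1}{4}\|u_n\|^2$ for the Cerami sequence) immediately yields $\lim\sup_n \|u_n\|^2 \leq 4d < r_0^2$. Since $d$ is an infimum over paths in $\Gamma$, it will suffice to exhibit a single admissible path along which $\max E < r_0^2/4$.

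Given any $u_* \in X \setminus \{0\}$, Lemma \ref{useful_lemma1} supplies some $t_1 > 0$ with $E(t_1 u_*) < 0$, so the ray $\gamma(t) = t\, t_1\, u_*$ lies in $\Gamma$ and $d \leq \max_{t \geq 0} E(t u_*)$. Using hypothesis $(A_4)$ to bound $F(t u_*) \geq C_q t^q |u_*|^q$ and discarding the non-positive singular contribution,
\begin{align*}
E(t u_*) \leq \tfrac{t^2}{2}\|u_*\|^2 + \tfrac{t^4}{4}W(u_*) - C_q t^q \|u_*\|_q^q.
\end{align*}
Selecting $u_*$ with $W(u_*) \leq 0$ (arranged by concentrating the support so that $\ln|x-y|<0$ dominates in the self-energy) lets me drop the quartic term, and elementary calculus on $h(t) := \tfrac{t^2}{2}\|u_*\|^2 - C_q t^q \|u_*\|_q^q$ gives
\begin{align*}
\max_{t \geq 0} h(t) = \tfrac{q-2}{2q}\, (qC_q)^{-2/(q-2)}\left(\tfrac{\|u_*\|}{\|u_*\|_q}\right)^{2q/(q-2)}.
\end{align*}
I will then drive the ratio $\|u_*\|/\|u_*\|_q$ toward the Sobolev infimum $S_q$ by a near-optimizing choice; substituting the strict lower bound $C_q > \tfrac{[2(q-2)]^{(q-2)/2}}{q^{q/2}} \tfrac{S_q^q}{r_0^{q-2}}$ supplied by $(A_4)$ into the formula above, the exponents telescope and $\max h < r_0^2/4$ follows by direct arithmetic.

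The principal obstacle is to arrange $W(u_*) \leq 0$ and $\|u_*\|/\|u_*\|_q$ arbitrarily close to $S_q$ simultaneously: because the Gagliardo seminorm $[\cdot]_{1/2,2}$ is scale-invariant in dimension one while $\|\cdot\|_2$ and $\|\cdot\|_q$ are not, naive translation or dilation performed to force $W \leq 0$ tends to perturb the Sobolev ratio away from its infimum. A careful construction (for instance via translation/dilation limits of near-extremal Sobolev profiles, in the spirit of the test functions used by Bo\"{e}r-Miyagaki) is needed to decouple the two constraints; once such a $u_*$ is produced, the final inequality $d < r_0^2/4$ is purely algebraic in $C_q$.
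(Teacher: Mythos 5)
Your argument is the paper's argument: reduce to the bound $d<r_0^2/4$, test $d$ against a ray through a function with nonpositive logarithmic self-energy, drop the $W$-term, maximize $\tfrac{t^2}{2}\|u_*\|^2-C_qt^q\|u_*\|_q^q$ in $t$, and feed the resulting $\tfrac{q-2}{2q}(qC_q)^{-2/(q-2)}S_q(u_*)^{2q/(q-2)}$ into the lower bound on $C_q$ from $(A_4)$; the exponent bookkeeping you describe is exactly what appears in the paper. The one point where you diverge is the ``principal obstacle'' you flag at the end, and it is a non-issue: since $(A_4)$ explicitly defers the definition of $S_q$ to this lemma, the paper simply \emph{defines} $S_q=\inf\{\|v\|/\|v\|_q: v\in\mathfrak{S}\}$ where $\mathfrak{S}=\{u\in X: u\neq 0,\ W(u)\leq 0\}$, i.e.\ the quotient is minimized only over the constraint set, so there is no need to produce a single $u_*$ that simultaneously has $W(u_*)\leq 0$ and nearly attains the unconstrained Sobolev constant. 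All that is required is $\mathfrak{S}\neq\emptyset$, which the paper obtains from the scaling $u_t(x)=t^2u(tx)$, for which $W(u_t)\to-\infty$ as $t\to\infty$. With that reading of $S_q$, your ``purely algebraic'' final step closes the proof and no decoupling construction is needed.
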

\begin{proof}
By Lemma \ref{Cerami_bdd}, we know that $(u_n)$ is bounded in $H^{\frac{1}{2}}(\mathbb{R})$. Certainly,  $\underset{n}{\lim\sup}\|u_n\|^2\leq 4c+o(1)$ is bounded above (and of course, below). We will find an estimate of the upper bound for this quantity. Consider the set $\mathfrak{S}=\{u\in X:u\neq 0, W(u)\leq 0\}$ and define $u_t(x)=t^2u(tx)$ for all $t>0$, $u\neq 0\in X$, $x\in\mathbb{R}$. We have 
$W(u_t)=t^6C(U)-t^6\ln t\|u\|_2^4\rightarrow-\infty$ as $t\rightarrow\infty$. This shows that $\mathfrak{S}$ is nonempty. By the Sobolev embedding theorem we have $\|u\|\geq C\|u\|_q$, for all $u\in H^{\frac{1}{2}}(\mathbb{R})\setminus\{0\}$.\\
We define $$S_q(v)=\frac{\|v\|}{\|v\|_q}.$$
Therefore, $S_q=\inf_{v\in\mathfrak{S}}S_q(v)\geq\inf_{v\neq 0}S_q(v)>0.$ We will now estimate the energy level $d$. Let $v\in\mathfrak{S}$, and $T>0$ be sufficiently small. Then $E(Tv)<0$. Take a path $\alpha:[0,1]\rightarrow X$ defined as $\alpha(t)=tTv$. Therefore,
\begin{align}\label{comp1}
d\leq\underset{0\leq t \leq 1}{\max}E(\alpha(t))=\underset{0\leq t \leq 1}{\max}E(tTv)\leq\underset{t\geq 0}{\max}E(tv).
\end{align}
Consequently, for $w\in\mathfrak{S}$, we have 
\begin{align}\label{comp2}
d\leq\underset{t\geq 0}{\max}E(tw)\leq\underset{t\geq 0}{\max}\left\{\frac{t^2}{2}\|w\|^2-C_qt^q\|w\|_q^q\right\}\leq \left(\frac{1}{2}-\frac{1}{q}\right)\frac{(S_q(w))^{\frac{2q}{q-2}}}{(qC_q)^{\frac{2}{q-2}}},
\end{align}
where we have used $(A_4)$. On taking infimum over $w\in\mathfrak{S}$, we obtain 
\begin{align}\label{comp3}
\underset{n}{\lim\sup}\|u_n\|^2\leq 4d\leq 2\cdot\frac{q-2}{q}\frac{(S_q(w))^{\frac{2q}{q-2}}}{(qC_q)^{\frac{2}{q-2}}}\leq r_0^2.
\end{align}
\end{proof}
Before we state and prove the next lemma we need to recall the following two theorems.
\begin{theorem}\label{hitch}
({\sc Di Nezza et al.} \cite[Theorem $7.1$]{16})
Let $s\in(0,1)$, $p\in[1,\infty)$, $q\in [1,p]$, $\Omega\subset\mathbb{R}^N$ be a bounded extension domain for $W^{s,p}$ and $T$ be a bounded subset of $L^p(\Omega)$. Suppose
that $$\underset{f\in T}{\sup}\iint_{\Omega\times\Omega}\frac{|f(x)-f(y)|^p}{|x-y|^{N+sp}}dxdy<\infty.$$
Then $T$ is pre-compact in $L^q(\Omega)$.
\end{theorem}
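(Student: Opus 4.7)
The plan is to apply the Riesz--Fr\'echet--Kolmogorov compactness criterion in $L^q(\Omega)$, combined with a mollification argument that converts the fractional Gagliardo bound into equicontinuity of translations. The extension-domain hypothesis is used at the outset: each $f\in T$ is replaced by its $W^{s,p}$-extension $\tilde f$ on $\mathbb{R}^N$, yielding a family uniformly bounded both in $L^p(\mathbb{R}^N)$ and in fractional seminorm.

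The heart of the argument is a mollification estimate. Fix a non-negative smooth mollifier $\rho_\epsilon$ supported in $B_\epsilon(0)$ and set $\tilde f_\epsilon:=\rho_\epsilon*\tilde f$. Applying Jensen's inequality to $|\tilde f_\epsilon(x)-\tilde f(x)|^p$, using Fubini together with the pointwise bound $\rho_\epsilon\le C\epsilon^{-N}\chi_{B_\epsilon}$, and then inserting the weight $|y|^{N+sp}/|y|^{N+sp}$ to synthesize the Gagliardo kernel, one obtains
\[
\|\tilde f_\epsilon-\tilde f\|_{L^p(\mathbb{R}^N)} \;\le\; C\epsilon^{s}\,[\tilde f]_{s,p},
\]
with a right-hand side that is uniform in $f\in T$ by hypothesis. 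This is the sole step in which the fractional differentiability genuinely enters.

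Next, for each fixed $\epsilon>0$ the regularized family $\{\tilde f_\epsilon\}_{f\in T}$ is uniformly bounded and uniformly Lipschitz on the compact set $\bar\Omega$, since H\"older applied to $\rho_\epsilon*\tilde f$ and $\nabla\rho_\epsilon*\tilde f$ produces bounds depending only on $\|\rho_\epsilon\|_{p'}$, $\|\nabla\rho_\epsilon\|_{p'}$, and $\sup_{f\in T}\|\tilde f\|_{p}$. Arzel\`a--Ascoli then delivers pre-compactness of $\{\tilde f_\epsilon\}_{f\in T}$ in $C(\bar\Omega)$, hence in $L^p(\Omega)$. Given $\eta>0$, I would choose $\epsilon$ so that the mollification estimate forces $\|\tilde f_\epsilon-\tilde f\|_{L^p(\Omega)}<\eta/2$ uniformly in $f$, then cover the regularized family by finitely many $L^p$-balls of radius $\eta/2$; the corresponding balls of radius $\eta$ cover $T$, establishing pre-compactness of $T$ itself in $L^p(\Omega)$.

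Finally, pre-compactness in $L^q(\Omega)$ for $q\in[1,p]$ follows immediately from the continuous inclusion $L^p(\Omega)\hookrightarrow L^q(\Omega)$ valid on bounded domains. I expect the mollification estimate to be the main obstacle, since it is the only point at which the fractional Sobolev hypothesis is actually invoked; the surrounding architecture is standard bookkeeping, with the extension-domain condition quietly ensuring that the reduction to $\mathbb{R}^N$ in the first step is legitimate.
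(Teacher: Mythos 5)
Your argument is correct, but be aware that the paper contains no proof of this statement at all: Theorem \ref{hitch} is imported verbatim from Di Nezza, Palatucci and Valdinoci \cite[Theorem 7.1]{16} purely as a tool for Lemma \ref{conc_comp}, so the only meaningful comparison is with the proof in that reference. That proof follows the same two-step template as yours --- extend to $\mathbb{R}^N$ using the extension-domain hypothesis, regularize uniformly, show the regularized family is totally bounded, and control the regularization error uniformly over $T$ via the Gagliardo seminorm --- but implements the regularization differently: instead of mollifying, it covers $\Omega$ by finitely many disjoint cubes $Q_1,\dots,Q_M$ of small side $\rho$ and replaces each extended $f$ by the piecewise-constant function equal on $Q_j$ to the mean value $\frac{1}{|Q_j|}\int_{Q_j}f$. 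That family lies in an $M$-dimensional space and is bounded, hence totally bounded with no appeal to Arzel\`a--Ascoli, while the error is bounded by $C\rho^{s}$ times the seminorm through exactly the computation you describe (Jensen, Fubini, and inserting $|x-y|^{N+sp}/|x-y|^{N+sp}$ over the region $|x-y|\lesssim\rho$). Your mollification route is sound and gives slightly more (pre-compactness already in $L^p$, plus the translation estimate $\|\tau_h\tilde f-\tilde f\|_{L^p}\le C|h|^{s}[\tilde f]_{s,p}$, which is of independent interest); the cube-average route is more elementary, avoiding the gradient bound on $\nabla\rho_\epsilon\ast\tilde f$ and the detour through $C(\bar\Omega)$. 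The only step you should write out in full is the displayed estimate $\|\tilde f_\epsilon-\tilde f\|_{L^p}\le C\epsilon^{s}[\tilde f]_{s,p}$, since it is the sole place the fractional hypothesis enters, but your sketch of it is already the right calculation.
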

\begin{theorem}\label{adam}
({\sc Adams} \cite[Theorem $7.41$]{adam})
Suppose that for $\Omega\subset\mathbb{R}^N$ there exists a strong $(M+1)-$extension operator $\mathfrak{O}$ and, for $|\delta|\leq|\alpha|=M$, linear operators $\mathfrak{O}_{\alpha\delta}$ continuous from $W^{s,p}(\Omega)$ into $W^{1,p}(\mathbb{R}^N)$ and from $L^p(\Omega)$ into $L^{p}(\mathbb{R}^N)$ such that if $u\in W^{m,p}(\Omega)$, then
\begin{align}\label{est}
D^{\alpha}\mathfrak{O}u(x)&=\underset{|\delta|\leq M}{\sum}\mathfrak{O}_{\alpha\delta}D^{\delta}
u(x).
\end{align}
If $s=M+\sigma>0$, $0\leq\sigma<1$, then $W^{s,p}(\Omega)$ coincides with the set of restrictions on $\Omega$ of functions in $W^{s,p}(\mathbb{R}^N)$.
\end{theorem}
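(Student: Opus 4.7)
The plan is to prove the characterization by establishing both inclusions separately, with essentially all the work happening in the reverse inclusion via the extension operator $\mathfrak{O}$.

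First I would dispose of the easy direction: if $\tilde{u}\in W^{s,p}(\mathbb{R}^N)$ and $u=\tilde{u}|_\Omega$, then $\|u\|_{L^p(\Omega)}\leq\|\tilde{u}\|_{L^p(\mathbb{R}^N)}$, and restricting the Gagliardo double integral from $\mathbb{R}^N\times\mathbb{R}^N$ to $\Omega\times\Omega$ only shrinks it. The bounds for the integer-order weak derivatives $D^\alpha u$ with $|\alpha|\leq M$ follow in the same manner. Hence every restriction lands in $W^{s,p}(\Omega)$.

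For the substantive direction, given $u\in W^{s,p}(\Omega)$ I would set $\tilde{u}=\mathfrak{O}u$ and check that $\tilde{u}\in W^{s,p}(\mathbb{R}^N)$ in the two regimes $\sigma=0$ and $\sigma>0$. The decisive tool is the identity \eqref{est}, which rewrites the higher-order derivatives of $\mathfrak{O}u$ as finite sums of bounded linear images $\mathfrak{O}_{\alpha\delta}D^\delta u$. For $|\alpha|\leq M$ the $L^p(\Omega)\to L^p(\mathbb{R}^N)$ continuity of each $\mathfrak{O}_{\alpha\delta}$, combined with $\|D^\delta u\|_{L^p(\Omega)}\leq\|u\|_{W^{s,p}(\Omega)}$ for $|\delta|\leq M$, yields $\|D^\alpha\tilde{u}\|_{L^p(\mathbb{R}^N)}\leq C\|u\|_{W^{s,p}(\Omega)}$. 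When $\sigma>0$, one must additionally control the Gagliardo seminorm of $D^\alpha\tilde{u}$ for $|\alpha|=M$; this is where I would invoke the $W^{s,p}(\Omega)\to W^{1,p}(\mathbb{R}^N)$ continuity assumed for the auxiliary operators, pushing $u$ through each $\mathfrak{O}_{\alpha\delta}$ into a function with a controlled fractional norm and then summing over the multi-indices in \eqref{est}. The resulting estimate $\|\mathfrak{O}u\|_{W^{s,p}(\mathbb{R}^N)}\leq C\|u\|_{W^{s,p}(\Omega)}$ exhibits $u=\mathfrak{O}u|_\Omega$ as the required restriction.

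The main obstacle I anticipate is the non-integer part $\sigma$: one has to translate the pointwise identity for $D^\alpha\mathfrak{O}u$ into a bound on the non-local Gagliardo quantity on all of $\mathbb{R}^N$, and it is here that the strong $(M+1)$-extension hypothesis and the carefully specified continuity of the $\mathfrak{O}_{\alpha\delta}$ between the relevant fractional spaces (possibly aided by real interpolation between $L^p$ and $W^{1,p}$) are indispensable. Once those mapping properties are invoked and the finite sum over $|\delta|\leq M$ is controlled term by term, the double-integral estimate reduces to intrinsic norms on $\Omega$, and the two inclusions together establish that $W^{s,p}(\Omega)$ coincides with the restrictions of $W^{s,p}(\mathbb{R}^N)$-functions.
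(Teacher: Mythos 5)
The first thing to note is that the paper offers no proof of this statement: Theorem \ref{adam} is imported verbatim from {\sc Adams} \cite[Theorem 7.41]{adam} and used as a black box in the proof of Lemma \ref{conc_comp}, so there is no in-paper argument to compare yours against. Measured against the cited source, your two-inclusion outline is the standard and correct skeleton: the forward inclusion by restricting the $L^p$ norms and the Gagliardo double integral from $\mathbb{R}^N\times\mathbb{R}^N$ to $\Omega\times\Omega$, and the reverse inclusion by exhibiting $u$ as the restriction of $\mathfrak{O}u$ and estimating $\|\mathfrak{O}u\|_{W^{s,p}(\mathbb{R}^N)}$ through the identity \eqref{est}.

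There is, however, a genuine gap exactly where you anticipate trouble. The stated hypotheses give continuity of the $\mathfrak{O}_{\alpha\delta}$ from $W^{s,p}(\Omega)$ into $W^{1,p}(\mathbb{R}^N)$ and from $L^p(\Omega)$ into $L^p(\mathbb{R}^N)$; neither endpoint controls the order-$\sigma$ Gagliardo seminorm of $D^{\alpha}\mathfrak{O}u$ for $|\alpha|=M$ directly, and summing \eqref{est} ``term by term'' does not close the estimate. What is actually needed is the identification of the real interpolation space $\left(L^p(\mathbb{R}^N),W^{1,p}(\mathbb{R}^N)\right)_{\sigma,p}$ with $W^{\sigma,p}(\mathbb{R}^N)$, together with the corresponding equivalence on $\Omega$ between the intrinsic double-integral norm and the interpolation norm --- this is the content of the sections of Adams preceding Theorem 7.41, and it is where the strong $(M+1)$-extension hypothesis genuinely enters. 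You mention interpolation only parenthetically as a possible aid; in the non-integer case it is the entire argument, and without stating and invoking these norm equivalences on both $\Omega$ and $\mathbb{R}^N$ your reverse inclusion is an announcement of the conclusion rather than a derivation of it. A smaller point: the easy direction as you phrase it presupposes the Sobolev--Slobodeckij (double-integral) definition of $W^{s,p}(\Omega)$, which is consistent with how the space is used in this paper, but should be said explicitly since the same symbol often denotes the interpolation space in the cited reference.
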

\noindent Finally, we have the following lemma.
\begin{lemma}\label{conc_comp}
Let $(u_n)\subset X$ be bounded in $H^{\frac{1}{2}}(\mathbb{R})$ and such that 
\begin{align}\label{conc_comp_1}
Q=\underset{n}{\lim\inf}\underset{y\in\mathbb{Z}}{\sup}\int_{B_{\frac{3}{2}}(y)}|u_n(x)|^2dx>0.
\end{align}
Then there exist $u\in H^{\frac{1}{2}}(\mathbb{R})\setminus\{0\}$ and $(y_n)\subset\mathbb{Z}$ such that up to a subsequence, $\tilde{u}_n=u_n(\cdot-y_n)\rightharpoonup u\in H^{\frac{1}{2}}(\mathbb{R})$. Here,  $B_{\frac{3}{2}}(y)=\{x\in\mathbb{R}:|x-y|<\frac{3}{2}\}$.
\end{lemma}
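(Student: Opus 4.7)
The plan is to run a standard Lions-type concentration-compactness argument adapted to the lattice $\mathbb{Z}$ of translations, using the compact fractional Sobolev embedding on a fixed bounded ball as the only nontrivial analytic input.

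First I would unpack the positivity $Q>0$: by the definition of $\liminf$, there exists $n_0\in\mathbb{N}$ such that $\sup_{y\in\mathbb{Z}}\int_{B_{3/2}(y)}|u_n|^2\,dx\geq Q/2$ for every $n\geq n_0$, so for each such $n$ I can choose $y_n\in\mathbb{Z}$ with $\int_{B_{3/2}(y_n)}|u_n|^2\,dx\geq Q/4$. I then set $\tilde u_n(x):=u_n(x-y_n)$. A routine change of variables in \eqref{norm1} shows that both $\|\cdot\|_2$ and the Gagliardo seminorm $[\cdot]_{1/2,2}$ are invariant under translations of the argument, whence $\|\tilde u_n\|=\|u_n\|$. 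Thus $(\tilde u_n)$ inherits the $H^{1/2}(\mathbb{R})$-bound of $(u_n)$, and by reflexivity I may extract a subsequence, still denoted $\tilde u_n$, such that $\tilde u_n\rightharpoonup u$ in $H^{1/2}(\mathbb{R})$ for some $u\in H^{1/2}(\mathbb{R})$.

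The crucial step is to verify $u\not\equiv 0$. The change of variables gives
\[
\int_{B_{3/2}(0)}|\tilde u_n(x)|^2\,dx=\int_{B_{3/2}(y_n)}|u_n(x)|^2\,dx\geq\frac{Q}{4}.
\]
The ball $B_{3/2}(0)\subset\mathbb{R}$ is a smooth bounded extension domain, so Theorem \ref{adam} guarantees that the restrictions of the $\tilde u_n$ lie in $W^{1/2,2}(B_{3/2}(0))$ with uniform norm control inherited from $\|\tilde u_n\|$. Theorem \ref{hitch} then provides a further subsequence converging strongly in $L^2(B_{3/2}(0))$, whose limit must coincide with $u|_{B_{3/2}(0)}$ by uniqueness of weak limits. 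Passing to the limit in the displayed inequality yields $\int_{B_{3/2}(0)}|u|^2\,dx\geq Q/4>0$, hence $u\neq 0$.

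The anticipated main obstacle is purely bookkeeping around the compact embedding: the embedding constants must be independent of the translation parameter $y_n$, which is why I perform the translation \emph{first} and then apply Theorem \ref{hitch} on the fixed reference ball $B_{3/2}(0)$ rather than on shifting balls. A second subtle point is that the entire argument is run in $H^{1/2}(\mathbb{R})$ rather than in $X$: the weight $\ln(1+|x|^2)$ appearing in \eqref{norm2} is \emph{not} translation invariant, so the translated sequence $(\tilde u_n)$ need not be bounded in $X$ and the weak limit $u$ need not lie in $X$. This is consistent with the conclusion of the lemma, which only asserts weak convergence in $H^{1/2}(\mathbb{R})$.
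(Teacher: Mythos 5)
Your argument is correct and is the standard Lions-type concentration-compactness scheme: select $y_n$ nearly attaining the supremum, translate, use translation invariance of the $H^{1/2}$-norm to extract a weak limit, and then use the compact embedding on the \emph{fixed} ball $B_{3/2}(0)$ (Theorems \ref{adam} and \ref{hitch}) to push the lower bound $Q/4$ through to the limit and conclude $u\neq 0$. The paper's own proof is organized rather differently: it asserts that the $\liminf$--$\sup$ structure produces $(y_n)$ with $|y_n|\to\infty$, claims $u\neq 0$ for the weak limit of the \emph{untranslated} sequence directly from \eqref{conc_comp_1}, invokes Lemma \ref{comp_emb} to get $u_n\to u$ in $L^2(\mathbb{R})$ (even though the hypothesis only provides a bound in $H^{\frac{1}{2}}(\mathbb{R})$, not in $X$), and only afterwards introduces $\tilde u_n$ via the extension operator $\mathfrak{O}_{00}$. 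Your version is the more careful one on exactly the two points you flag: performing the translation first so that the compact embedding is applied on a single reference ball with constants independent of $n$, and working entirely in $H^{1/2}(\mathbb{R})$ because the logarithmic weight in \eqref{norm2} is not translation invariant, so no $X$-bound on $(\tilde u_n)$ is available at this stage. The only blemish is a sign convention: with $\tilde u_n=u_n(\cdot-y_n)$ the change of variables gives $\int_{B_{3/2}(0)}|\tilde u_n|^2\,dx=\int_{B_{3/2}(-y_n)}|u_n|^2\,dx$, so you should either select $y_n$ as the negative of the concentration point or write $\tilde u_n=u_n(\cdot+y_n)$; this is harmless after relabeling and does not affect the conclusion.
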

\begin{proof}
The property of $\lim\inf$ and $\sup$ together produces the sequence $(y_n)$ such that $|y_n|\rightarrow\infty$ and the boundedness of $(u_n)$ in $H^{\frac{1}{2}}(\mathbb{R})$ produces  $u$ such that $u_n\rightharpoonup u\in H^{\frac{1}{2}}(\mathbb{R})$. Also, $u\neq 0$ given condition \eqref{conc_comp_1}. By Lemma \ref{comp_emb} we have that $u_n\rightarrow u$ in $L^2(\mathbb{R})$ and hence from Theorem \ref{hitch} we have $u_n\rightarrow u$   in $L^2(B_{\frac{3}{2}}(y_n))$. Therefore, there exists a subsequence such that $u_n\rightarrow u$ a.e. in $B_{\frac{3}{2}}(y_n)$. These $\tilde{u}_n=u_n(\cdot-y_n)$ are nothing but the restrictions of the functions in $(u_n)$ over $B_{\frac{3}{2}}(y_n)$. Thus by Theorem \ref{adam}, we can consider the functions $\mathfrak{O}_{00}\tilde{u}_n$ that will still be denoted by $\tilde{u}_n$. Therefore, $\tilde{u}_n\rightharpoonup u\in H^{\frac{1}{2}}(\mathbb{R})$.
\end{proof}
\section{Proof of the Main Theorem}
\noindent We now give a proof of Theorem \ref{main_res_1}. First, we will prove another lemma.
\begin{lemma}\label{alternative}
Let $q>2$ and $(u_n)\subset X$ satisfying the Cerami condition. On passing to a subsequence, if necessary, exactly one of the following statements holds true.
\begin{enumerate}[label=(\roman*)]
\item $\|u_n\|\rightarrow 0$ and $E(u_n)\rightarrow 0$ as $n\rightarrow\infty$.
\item There exists $(y_n)\subset\mathbb{Z}$ such that $|y_n|\rightarrow\infty$ such that $\tilde{u}_n=u_n(\cdot-y_n)\rightarrow u$ in $X$, for a nontrivial critical point $u\in X$ of $E$.
\end{enumerate}	
\end{lemma}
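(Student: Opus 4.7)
The plan is to argue via a Lions-type concentration-compactness dichotomy applied to the concentration quantity
\[ Q = \liminf_{n\to\infty} \sup_{y\in\mathbb{Z}} \int_{B_{3/2}(y)} |u_n(x)|^2 \, dx \]
from Lemma \ref{conc_comp}. Lemmas \ref{Cerami_bdd} and \ref{bdd_seq} already guarantee that $(u_n)$ is bounded in $H^{\frac{1}{2}}(\mathbb{R})$ with $\limsup_n \|u_n\|^2 \leq r_0^2$ sufficiently small; this smallness is what will later make the Moser--Trudinger inequality applicable in its sharp regime. I would then split into the two cases $Q=0$ and $Q>0$, corresponding respectively to conclusions (i) and (ii).

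If $Q=0$ (vanishing), a standard Lions-type argument yields $u_n \to 0$ in $L^p(\mathbb{R})$ for every $p>2$. I would then show that every nonlinear contribution to $\langle E'(u_n), u_n\rangle$ and to $E(u_n)$ vanishes: the term $\int f(u_n)u_n\,dx$ is controlled via estimate \eqref{conseq2}, H\"older's inequality with an exponent $\rho_1 \sim 1$, and Lemma \ref{MT} applied in the range $\rho_1\beta\|u_n\|^2 < \omega$ guaranteed by the small-norm bound of Lemma \ref{bdd_seq}; the truncated singular term is handled via \eqref{sing_est}; and $W(u_n) \to 0$ follows from the compact embedding in Lemma \ref{comp_emb} together with \eqref{cont_B} and \eqref{cont_A}. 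What remains of $\langle E'(u_n),u_n\rangle = o(1)$ then forces $\|u_n\|^2 \to 0$, and then $E(u_n) \to 0$, giving (i).

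If $Q>0$ (nonvanishing), Lemma \ref{conc_comp} produces $(y_n) \subset \mathbb{Z}$ with $|y_n| \to \infty$ and a nonzero $u \in H^{\frac{1}{2}}(\mathbb{R})$ such that $\tilde{u}_n := u_n(\cdot - y_n) \rightharpoonup u$ in $H^{\frac{1}{2}}(\mathbb{R})$. Since $(-\Delta)^{1/2}$, the $L^2$ norm, and the bilinear form $C(\cdot,\cdot)$ are all invariant under integer translations, $E$ is translation invariant, so $(\tilde{u}_n)$ is again a Cerami sequence at level $d$. The main obstacle, as flagged by the authors, enters here: the norm of $X$ is not translation invariant, so it is not immediate that $\tilde{u}_n \in X$ with uniform $X$-bound. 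To overcome this I would invoke the Ozawa-type Lemma \ref{MS_lemma}: combining $\langle E'(\tilde{u}_n),\tilde{u}_n\rangle = o(1)$ with the decomposition $W = A - B$ and the $L^2$-bound on $B(\tilde{u}_n^2,\tilde{u}_n^2)$ from \eqref{cont_B} yields $\sup_n A(\tilde{u}_n^2,\tilde{u}_n^2) < \infty$, whence $\|\tilde{u}_n\|_* < C$; thus $(\tilde{u}_n)$ is bounded in $X$ and $\tilde{u}_n \rightharpoonup u$ in $X$.

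To conclude, I would promote weak convergence to strong convergence in $X$. Setting $v_n := \tilde{u}_n - u$, testing the Cerami condition against $v_n$ and exploiting the compact embedding in $L^p(\mathbb{R})$ from Lemma \ref{comp_emb}, Lemma \ref{useful_lem_1} to handle the bilinear differences, the exponential-growth estimate \eqref{conseq2} in the small-norm regime to control $\int (f(\tilde{u}_n) - f(u))v_n\,dx$, and \eqref{sing_est} for the truncated singular piece, I would obtain $\|v_n\| \to 0$ in $H^{\frac{1}{2}}(\mathbb{R})$. A second application of Lemma \ref{MS_lemma}, now with $A(\tilde{u}_n^2, v_n^2) \to 0$ and $\|v_n\|_2 \to 0$, gives $\|v_n\|_* \to 0$, hence $\tilde{u}_n \to u$ in $X$. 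Passing to the limit in $\langle E'(\tilde{u}_n), \varphi\rangle = o(1)$ for each $\varphi \in X$ then shows that $u$ is a nontrivial critical point of $E$, completing (ii). The decisive step throughout is the logarithmic-Choquard control via Lemma \ref{MS_lemma}, which is what allows the proof to survive the failure of translation invariance of the $X$-norm.
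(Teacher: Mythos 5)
Your proposal follows essentially the same route as the paper's proof: the Lions dichotomy on the concentration quantity $Q$, the Ozawa-type Lemma \ref{MS_lemma} to recover $X$-boundedness of the translated sequence, Lemma \ref{useful_lem_1} together with the compact embedding of Lemma \ref{comp_emb} to upgrade weak to strong convergence in $H^{\frac{1}{2}}(\mathbb{R})$, and a second application of Lemma \ref{MS_lemma} to handle the $\|\cdot\|_{*}$ part. The one step you under-specify is the transfer of the Cerami smallness to the translated frame: your assertion that $(\tilde{u}_n)$ ``is again a Cerami sequence'' is precisely what fails because $\|\cdot\|_X$ is not translation invariant, and the paper replaces it by the explicit bound $\|u(\cdot+y_n)\|_X^2\leq C\|u_n\|_X^2$, derived from $\ln(1+|x+y_n|)\leq\ln(1+|x|)+\ln(1+|y_n|)$ together with the lower bound $\|u_n\|_{*}^2\geq C'\ln(1+|y_n|)$ (see \eqref{ineq6}--\eqref{ineq10}), which is what actually lets $\langle E'(\tilde{u}_n),\tilde{u}_n-u\rangle\rightarrow 0$ and $\langle E'(u),v\rangle=0$ be read off from the Cerami condition on the original sequence.
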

\begin{proof}
By Lemma \ref{bdd_seq}, we see that the Cerami sequence $(u_n)$ is bounded in $H^{\frac{1}{2}}(\mathbb{R})$. Therefore there exists a subsequence from Lemma \ref{conc_comp} such that 
\begin{align}\label{ineq1}
\int_{\mathbb{R}}(e^{\rho_1\beta u_n^2}-1)dx&\leq C_{\beta},~\text{for every}~n\in\mathbb{N}.
\end{align}
Suppose that $(i)$ does not hold.\\
{\it Claim $A$}:~$\underset{n}{\lim\inf}\underset{y\in\mathbb{Z}}{\sup}\int_{B_2(y)}|u_n(x)|^2dx>0$.\\
On the contrary, suppose this does not hold. By Lion's Lemma $2.4$ in
 {\sc Yu et al.} \cite{40}, 
 we have $u_n\rightarrow 0$ in $L^{a}(\mathbb{R})$ for all $a>2$. By \eqref{cont_V}, we have $V(u_n)\rightarrow 0$ as $n\rightarrow\infty$. Since $q>2$ we have by Remark \ref{conseq0}, for a subsequence, that    
\begin{align}\label{ineq2}
\begin{split}
\left|\int_{\mathbb{R}}f(u_n)u_n dx\right|\leq\epsilon\|u_n\|_2^2+C_7\|u_n\|_{qr_2}^q\rightarrow 0,
\int_{\mathbb{R}}|u_n|^{1-\gamma}dx\rightarrow 0~\text{as}~\epsilon\rightarrow 0, n\rightarrow\infty. 
\end{split}
\end{align}
Furthermore, 
\begin{align}\label{ineq3}
\begin{split}
\|u_n\|^2+U(u_n)&=\langle E'(u_n),u_n\rangle+V(u_n)+\int_{\mathbb{R}}f(u_n)u_ndx+\mu\int_{\mathbb{R}}|u_n|^{1-\gamma}dx\rightarrow 0~\text{as}~n\rightarrow\infty. 
\end{split}
\end{align}
This further implies that $\|u_n\|, U(u_n)\rightarrow 0$ as $n\rightarrow\infty$. By the embedding $H^{\frac{1}{2}}(\mathbb{R})\hookrightarrow L^2(\mathbb{R})$, we have that $\|u_n\|_2\rightarrow 0$ as $n\rightarrow\infty$. Also by Remark \ref{conseq0}, we have $\int_{\mathbb{R}}F(u_n)dx\rightarrow 0$ as $n\rightarrow\infty$. Thus $E(u_n)\rightarrow 0$ as $n\rightarrow\infty$ which is a contradiction. Thus $$\underset{n}{\lim\inf}\underset{y\in\mathbb{Z}}{\sup}\int_{B_2(y)}|u_n(x)|^2dx>0.$$
This fact combined with Lemma \ref{conc_comp}, helps us to produce a sequence $(y_n)\subset\mathbb{Z}$ and $u\in H^{\frac{1}{2}}(\mathbb{R})\setminus\{0\}$ such that $u_n(\cdot-y_n)=\tilde{u}_n\rightharpoonup u$ in $H^{\frac{1}{2}}(\mathbb{R})$. Therefore $(\tilde{u}_n)$ is bounded in $L^p(\mathbb{R})$ for any $p\geq 2$ and hence $\tilde{u}_n(x)\rightarrow u(x)$ a.e. in $\mathbb{R}$. Now, observe that for $q>2$,
\begin{align}\label{ineq4}
\begin{split}
U(\tilde{u}_n)=U(u_n)&=\langle E'(u_n),u_n \rangle+V(u_n)+\int_{\mathbb{R}}f(u_n)u_ndx+\mu\int_{\mathbb{R}}|u_n|^{1-\gamma}dx-\|u_n\|^2\\
&\leq C_0\|u_n\|_4^2+\epsilon C_8+C_9\|u_n\|_{qr_2}^q+o(1)\leq C_{10}+o(1).
\end{split}
\end{align}
Therefore, $U(\tilde{u}_n)<\infty$. Hence, since $(\tilde{u}_n)$ is bounded in $L^2(\mathbb{R})$ it follows by Lemma \ref{MS_lemma} that, $(\|\tilde{u}_n\|_{*})$ is bounded in $X$. By the reflexivity of $X$, we get $\tilde{u}_n\rightharpoonup u$ in $X$. From Lemma \ref{comp_emb} we obtain $\tilde{u}_n\rightarrow u$ in $L^p(\mathbb{R})$ for all $p\geq 2$.\\
{\it Claim B:}~$\langle E'(\tilde{u}_n),\tilde{u}_n-u\rangle\rightarrow 0$ as $n\rightarrow\infty$.\\
A simple change of variable guarantees that $\langle E'(\tilde{u}_n), \tilde{u}_n-u\rangle=\langle E'(u_n),u_n-u(x+y_n)\rangle$. Thus,
\begin{align}\label{ineq5}
\begin{split}
|\langle E'(\tilde{u}_n),\tilde{u}_n-u\rangle|&=|\langle E'(u_n),u_n-u(x+y_n)\rangle|\\
&\leq\|E'(u_n)\|_{X'}(\|u_n\|_X+\|u(\cdot+y_n)\|_X),~\text{for every}~n\in\mathbb{N}.
\end{split}
\end{align}
We will now try to estimate $\|u(\cdot+y_n)\|_X$. Suppose that $|y_n|\rightarrow\infty$. Then for each $x\in\mathbb{R}$,
\begin{align}\label{ineq6}
\ln(1+|y_n|)-\ln(1+|x-y_n|)&=\ln\left(\frac{1+|y_n|}{1+|x-y_n|}\right)\rightarrow 0,~n\rightarrow\infty.
\end{align}
Thus there exists $C_{11}>0$ such that $\ln(1+|x-y_n|)\geq C_{11}\ln(1+|y_n|)$ for all $n\in\mathbb{N}$, which implies 
\begin{align}\label{ineq7'}
\begin{split}
\|u_n\|_*^2&\geq \int_{\mathbb{R}}\ln(1+|x-y_n|)\tilde{u}_n^2dx\\
&\geq C_{11}\ln(1+|y_n|)\int_{\mathbb{R}}\tilde{u}_n^2dx\geq C_{11}\ln(1+|y_n|),~\text{for every}~n\in\mathbb{N}.
\end{split}
\end{align}
Now, if $y_n\rightarrow y_0$, then obviously, up to a subsequence still denoted by $(y_n)$, we have $y_n=y_0$. Let $y_0>0$. Suppose that $\delta>0$ and define $S=(-b,0)$. For each $x\in S$, we have  $|x-y_0|>|y_0|$. Thus, by the Mean value theorem of integration, there exists $x_{b}\in S$ such that
\begin{align}\label{ineq7}
\begin{split}
\|\tilde{u}_n\|_{*}^2&\geq\int_{S}\ln(1+|x-y_n|)\tilde{u}_n^2dx=a\ln(1+|x-y_n|)\tilde{u}_n^2(x_a)\\
&=C_{11}'\ln(1+|x-y_n|)\geq C_{11}'\ln(1+|y_0|)=C_{11}'\ln(1+|y_n|),~\text{for some}~C_{11}'>0. 
\end{split}
\end{align}
Similarly, for the case of $y_0<0$ we arrive at a similar conclusion. Furthermore, when $y_0=0$, we have $\tilde{u}_n=u_n$ and hence the conclusion is straightforward. Thus for any of the cases there exists $C_{11}'>0$  (need not be the same constant but is denoted by the same name) such that 
\begin{align}\label{ineq8'}\|u_n\|_*^2\geq C_{11}'\ln(1+|y_n|),~\text{for every}~n\in\mathbb{N}.\end{align}
From \eqref{log_fun_prop} we obtain
\begin{align}\label{ineq8}
\begin{split}
\|\tilde{u}_n\|_*^2&=\int_{\mathbb{R}}\ln(1+|x+y_n|)u_n^2dx\leq\|u_n\|_*^2+\ln(1+|y_n|)\|u_n\|_2^2.
\end{split}
\end{align}
By the weak lower semicontinuity of norm and by the $\mathbb{Z}$-invariance of $\|\cdot\|_2$, it follows from \eqref{log_fun_prop}, \eqref{ineq7}, \eqref{ineq8} that
\begin{align}\label{ineq9}
\begin{split}
\|u(\cdot+y_n)\|_*^2&\leq \|u\|_*^2+\ln(1+|y_n|)\|u\|_2^2\leq  \|\tilde{u}_n\|_*^2+\ln(1+|y_n|)\|u_n\|_2^2\\
&=\|u_n\|^2+\|u_n\|_*^2(1+C_{12}\|u_n\|_2^2)\leq\|u_n\|^2+C_{13}\|u_n\|_*^2\leq C_{14}\|u_n\|_X^2
\end{split}
\end{align}
for $n\in\mathbb{N}$. This implies that there exists $C_{15}>0$ such that, on passing to a subsequence, we obtain,
\begin{align}\label{ineq10}
\begin{split}
\|u(\cdot+y_n)\|_*^2&=\|u\|^2+\|u(\cdot+y_n)\|_*^2\leq\|u_n\|^2+C_{14}\|u_n\|_X^2\leq C_{15}\|u_n\|_X^2.
\end{split}
\end{align}
By the Cerami condition we have 
\begin{align}\label{ineq11}
\begin{split}
|\langle E'(\tilde{u}_n),\tilde{u}_n-u\rangle|&\leq (1+\sqrt{C_{15}})\|E'(u_n)\|_{X'}\|u_n\|_X\rightarrow 0,~\text{as}~n\rightarrow\infty.\\
\end{split}
\end{align}
{\it Claim C:}~$\int_{\mathbb{R}}f(\tilde{u}_n)(\tilde{u}_n-u)dx\rightarrow 0$, as $n\rightarrow\infty$.\\
By the $\mathbb{Z}$-invariance of $\|\cdot\|$, the Moser-Trudinger inequality, and \eqref{expo_est} we get 
\begin{align}\label{ineq12}
\begin{split}
\left|\int_{\mathbb{R}}f(\tilde{u}_n)(\tilde{u}_n-u)dx\right|\leq\|\tilde{u}_n\|_2\|\tilde{u}_n-u\|_2+C_{16}\|\tilde{u}_n-u\|_{qr_2}^q\rightarrow 0,~\text{as}~n\rightarrow\infty.
\end{split}
\end{align}
Hence Claim C has been verified.\\
Furthermore, 
\begin{align}\label{ineq13}
|\langle V'(\tilde{u}_n),\tilde{u}_n-u\rangle|\leq c_0\|\tilde{u}_n\|_4^3\|\tilde{u}_n-u\|_4\rightarrow 0
\end{align}
and
\begin{align}\label{ineq14}
\begin{split}
\langle U'(\tilde{u}_n),\tilde{u}_n-u\rangle&=A(\tilde{u}_n^2,\tilde{u}_n(\tilde{u}_n-u)^2)=A(\tilde{u}_n^2,(\tilde{u}_n-u)^2)+A(\tilde{u}_n^2,u(\tilde{u}_n-u)^2).
\end{split}
\end{align}
Since $(\tilde{u}_n)$ is bounded in $X$, invoking Lemma \ref{useful_lem_1}, we obtain $A(\tilde{u}_n^2,u(\tilde{u}_n-u)^2)\rightarrow 0$, as $n\rightarrow\infty$. Thus
\begin{align}\label{ineq15}
\begin{split}
o(1)&=\|\tilde{u}_n\|^2-\|u\|^2+A(\tilde{u}_n^2,(\tilde{u}_n-u)^2)+o(1)\geq \|\tilde{u}_n\|^2-\|u\|^2+o(1).
\end{split}
\end{align}
Hence $\|\tilde{u}_n\|\rightarrow\|u\|$ and $A(\tilde{u}_n^2,(\tilde{u}_n-u)^2)\rightarrow 0$. Therefore $\|\tilde{u}_n-u\|\rightarrow 0$ as $n\rightarrow\infty$. Furthermore, by Lemma \ref{MS_lemma}, we have $\|\tilde{u}_n-u\|_*\rightarrow 0$ which implies that $\tilde{u}_n\rightarrow u$ in $X$ as $n\rightarrow\infty$. It remains to show that $u$ is indeed a critical point of the functional $E$. As deduced in \eqref{ineq9}, one can show that there exists $C_{17}>0$ such that 
\begin{align}\label{ineq16}
\|v(\cdot+y_n)\|_X&\leq C_{17}\|u_n\|_X,~\text{for every}~n\in\mathbb{N}.
\end{align}
Therefore,
\begin{align}\label{ineq17}
\begin{split}
|\langle E'(u),v\rangle|=\underset{n\rightarrow\infty}{\lim}|\langle E'(\tilde{u}_n),v\rangle|=\underset{n\rightarrow\infty}{\lim}|\langle E'(u_n),v(\cdot+y_n)\rangle|&=C_{17}\underset{n\rightarrow\infty}{\lim}\|E'(u_n)\|_{X'}\|u_n\|_X\\&=0,~\text{for every}~v\in X.
\end{split}
\end{align}
\end{proof}
\begin{proof}[Proof of Theorem \ref{main_res_1}:]
\begin{enumerate}[label=(\roman*)]
\item By Lemma \ref{mp_geo}, Remark \ref{cerami_rem} and Lemma \ref{alternative}, we can conclude that there exists $u_0$, a critical point of $E$, such that $E(u_0)=d$. 
\item Define $$D=\{v\in X\setminus\{0\}:E'(v)=0\}.$$
Clearly, $D\neq\emptyset$ since $E'(u_0)=0$. This allows us to consider $(u_n)\subset D$, satisfying $E(u_n)\rightarrow d'=\underset{v\in D}{\inf}\{E(v)\}$. Clearly, $d'\in[-\infty,d]$ by the definition of $d'$. Obviously, if $d=d'$ then we already have a solution, namely $u_0$. Also if $d'=0$, then we choose the sequence $(u_n)$ such that $E(u_n)\rightarrow 0$ as $n\rightarrow\infty$ but $\|u_n\|\nrightarrow 0$ since $(a)$ $0$ is not a critical point of $E$, $(b)$ this choice of the sequence $(u_n)$ helps to rule out $(i)$ of Lemma \ref{alternative}.\\
Otherwise if $0\neq d'<d$, there exists a subsequence, still denoted by $(u_n)$, such that $E(u_n)<d$, for all $n$. Thus, Lemma \ref{bdd_seq} holds for this subsequence as well and the results thus derived as a consequence of Lemma \ref{bdd_seq} are all applicable. Furthermore, by using Lemma \ref{mp_geo} and by Lemma \ref{alternative}, there exists $(y_n)\subset\mathbb{Z}$ such that $\tilde{u}_n\rightarrow u$ in $X$ where $u$ turned out to be a nontrivial critical point of $E$ in $X$ by $(ii)$ of Lemma \ref{alternative}. Hence, $E'(u)=\underset{n}{\lim}E'(u_n)=0$. This implies that $u\in D$ and $E(u)=\underset{n}{\lim}E(\tilde{u}_n)=\underset{n}{\lim}E(u_n)=d'>-\infty$.  
\end{enumerate}
\end{proof}
\section{Appendix}
\noindent Lemma \ref{existence_positive_soln} will establish the existence of a positive solution to \eqref{auxprob} and Lemma \ref{u_greater_u_lambda} will guarantee that a solution to \eqref{main} is greater than or equal to the solution to \eqref{auxprob}.
\begin{lemma}[Weak Comparison Principle]\label{weak comparison}
	Let $u, v\in X$. Suppose that, $-\Delta^{\frac{1}{2}}v-\frac{\mu}{v^{\gamma}}\geq-\Delta^{\frac{1}{2}}u-\frac{\mu}{u^{\gamma}}$ weakly in $\mathbb{R}$.
	Then $v\geq u$ in $\mathbb{R}.$
\end{lemma}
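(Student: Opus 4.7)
The plan is to test the weak inequality against the positive-part truncation $(u-v)^{+}$ and combine two monotonicity ingredients: the coercivity of the Gagliardo bilinear form associated with $(-\Delta)^{1/2}$ when paired with a positive-part cutoff, together with the strict monotonicity of $t\mapsto -\mu/t^{\gamma}$ on $(0,\infty)$.

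First, I would rewrite the hypothesis in weak form: for every nonnegative $\phi\in X$,
\[
\iint_{\mathbb{R}\times\mathbb{R}}\frac{\bigl((v-u)(x)-(v-u)(y)\bigr)\bigl(\phi(x)-\phi(y)\bigr)}{|x-y|^{2}}\,dx\,dy \;\geq\; \mu\int_{\mathbb{R}}\phi\!\left(\frac{1}{v^{\gamma}}-\frac{1}{u^{\gamma}}\right)dx,
\]
where the normalizing constant is suppressed per the convention fixed in Section 2. Since the positive-part truncation is bounded on $H^{1/2}(\mathbb{R})$ and the pointwise bound $0\leq (u-v)^{+}\leq |u|+|v|$ preserves the logarithmically weighted $L^{2}$ contribution defining $X$, the choice $\phi=(u-v)^{+}$ is admissible.

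Next I would bound both sides of the resulting inequality. For the right-hand side, the integrand vanishes off $\{u>v\}$; on that set $u,v>0$ (built into the problem whose solutions $u$ and $v$ are) and $u>v$ forces $1/v^{\gamma}>1/u^{\gamma}$, so the right-hand side is nonnegative. For the left-hand side, the classical pointwise inequality
\[
\bigl(w(x)-w(y)\bigr)\bigl(w^{+}(x)-w^{+}(y)\bigr)\;\geq\;\bigl(w^{+}(x)-w^{+}(y)\bigr)^{2},
\]
applied with $w=u-v$, gives
\[
\iint_{\mathbb{R}\times\mathbb{R}}\frac{\bigl((v-u)(x)-(v-u)(y)\bigr)\bigl((u-v)^{+}(x)-(u-v)^{+}(y)\bigr)}{|x-y|^{2}}\,dx\,dy \;\leq\; -\,[(u-v)^{+}]_{1/2,2}^{2}\;\leq\;0.
\]
Chaining these bounds, $0\geq \mathrm{LHS}\geq \mathrm{RHS}\geq 0$, so every quantity vanishes; in particular $[(u-v)^{+}]_{1/2,2}=0$, which together with $(u-v)^{+}\in L^{2}(\mathbb{R})$ forces $(u-v)^{+}\equiv 0$, i.e., $v\geq u$ a.e.\ in $\mathbb{R}$.

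The main obstacle I anticipate is technical rather than conceptual. The singular expressions $1/u^{\gamma}$ and $1/v^{\gamma}$ are only pointwise meaningful where $u,v>0$, so strictly speaking one should first test against the regularization $\min\{(u-v)^{+},n\}$ restricted to $\{u\geq\varepsilon\}\cap\{v\geq\varepsilon\}$, and then pass to the limit $\varepsilon\to 0^{+}$ followed by $n\to\infty$ via monotone convergence; the positivity of the solutions of the singular problems in the Appendix guarantees that the sets $\{u<\varepsilon\}$ and $\{v<\varepsilon\}$ shrink in measure as $\varepsilon\to 0^{+}$. A secondary, milder subtlety is verifying that the positive-part truncation lands inside $X$ rather than merely $H^{1/2}$, which follows from $0\leq (u-v)^{+}\leq |u|+|v|$ and the definition \eqref{norm2} of the $X$-norm. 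Once this admissibility step is in place, the monotonicity chain above delivers the conclusion immediately.
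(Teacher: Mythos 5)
Your proposal is correct and follows essentially the same route as the paper: test the weak inequality against $(u-v)^{+}$, observe that the singular term contributes a nonnegative quantity on $\{u>v\}$ by monotonicity of $t\mapsto t^{-\gamma}$, and use the pointwise positive-part inequality to force the Gagliardo seminorm of $(u-v)^{+}$ to vanish. Your version is in fact slightly more careful than the paper's (which states the pointwise relation as an equality rather than an inequality, and does not discuss admissibility of the test function or the passage from zero seminorm to $(u-v)^{+}\equiv 0$), but the argument is the same.
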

\begin{proof}
	The idea was motivated by
	 {\sc Saoudi et al.} \cite{ghosh1}. 
	 Since, $-\Delta^{\frac{1}{2}}v-\frac{\mu}{v^{\gamma}}\geq-\Delta^{\frac{1}{2}}u-\frac{\mu}{u^{\gamma}}$ weakly in $\mathbb{R}$, we have
	{\small\begin{align}\label{compprinci}
		\langle-\Delta^{\frac{1}{2}}v,\phi\rangle-\int_{\mathbb{R}}\frac{\mu\phi}{v^{\gamma}}dx&\geq\langle-\Delta^{\frac{1}{2}}u,\phi\rangle-\int_{\mathbb{R}}\frac{\mu\phi}{u^{\gamma}}dx,
		\end{align}}
	for every ${\phi\geq 0\in X}$. \\
	In particular, choose $\phi=(u-v)^{+}$. Then the inequality in \eqref{compprinci} looks as follows.
	{\small\begin{align}\label{compprinci1}
		&\langle-\Delta^{\frac{1}{2}}v+\Delta^{\frac{1}{2}}u,(u-v)^{+}\rangle-\mu\int_{\mathbb{R}_+}(u-v)^{+}\left(\frac{1}{v^{\gamma}}-\frac{1}{u^{\gamma}}\right)dx\geq 0,
		\end{align}}
	where $\mathbb{R}_+=\{x:u(x)>v(x)\}$.	
	Let $\psi=u-v$. 
	We choose the test function $\phi=(u-v)^{+}$. We express,
	\begin{align}
	\psi&=u-v=(u-v)^{+}-(u-v)^{-}\nonumber
	\end{align}
	to obtain
	\begin{align}\label{negativity}
	[\psi(y)-\psi(x)][\phi(x)-\phi(y)]&=-(\psi^{+}(x)-\psi^{+}(y))^2.
	\end{align}
	The equation in \eqref{negativity} implies
	\begin{align}
	0&\geq \langle  -\Delta^{\frac{1}{2}}v+\Delta^{\frac{1}{2}} u,(v-u)^{+} \rangle=-\int_{\mathbb{R}}\frac{1}{|x-y|^{2}}(\psi^{+}(x)-\psi^{+}(y))^2dxdy\geq0.
	\end{align}
	This leads to the conclusion about the Lebesgue measure of $\mathbb{R}_+$, i.e., $|\mathbb{R}_+|=0$. In other words $v\geq u$ a.e. in $\mathbb{R}$.	 
\end{proof}
\begin{lemma}
	\label{existence_positive_soln}
	Let $\mu>0$. Then the following problem 
	\begin{eqnarray}\label{auxprob_appendix}
	(-\Delta)^{\frac{1}{2}}u+u+(\ln|\cdot|*|u|^2)&=&\mu u^{-\gamma},~\text{in}~\mathbb{R}\nonumber\\
	u&>&0,~\text{in}~\mathbb{R}
	\end{eqnarray}
	has a unique weak solution in $X_0$. This solution is denoted by $\underline{u}_{\mu}$, satisfies $\underline{u}_{\mu}\geq \epsilon_{\mu} v_0$ a.e. in $\Omega$, where $\epsilon_{\mu}>0$ is a constant. Here $v_0>0$ is a suitable function such that $E(\epsilon_{\mu} v_0)<0$.
\end{lemma}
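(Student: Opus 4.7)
The plan is to obtain $\underline{u}_\mu$ by direct minimization of the functional naturally associated with \eqref{auxprob_appendix}, namely
\[
J_\mu(u) = \frac{1}{2}\|u\|^2 + \frac{1}{4}W(u) - \frac{\mu}{1-\gamma}\int_{\mathbb{R}}|u|^{1-\gamma}\,dx,
\]
on the positive cone $K = \{u \in X : u \geq 0\}$. First I would verify that $J_\mu$ is bounded below and coercive on $K$: since $0 < 1-\gamma < 1$, H\"{o}lder's inequality together with the embedding $X \hookrightarrow L^p(\mathbb{R})$ from Lemma \ref{comp_emb} bounds the singular term by a constant times $\|u\|^{1-\gamma}$, while $\tfrac{1}{2}\|u\|^2 + \tfrac{1}{4}U(u)$ provides quadratic or better growth, and the non-positive piece $-\tfrac{1}{4}V(u)$ is controlled by $\|u\|_4^4$ through \eqref{cont_V} and can be absorbed via interpolation. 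Combining this with weak lower semicontinuity (Lemma \ref{useful_lem_2}(iii) for $U$, Fatou's lemma for the singular term, the norm term trivially) yields a minimizer $\underline{u}_\mu \in K$.

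Next I would show that $\underline{u}_\mu \not\equiv 0$ and that it solves \eqref{auxprob_appendix} weakly. For any positive $v_0 \in X$, a scaling argument analogous to Lemma \ref{useful_lemma1} produces $\epsilon_\mu > 0$ with $J_\mu(\epsilon_\mu v_0) < 0$, because for small $t>0$ the term $-\tfrac{\mu t^{1-\gamma}}{1-\gamma}\int v_0^{1-\gamma}$ dominates all positive contributions as $1-\gamma < 2$. Hence $J_\mu(\underline{u}_\mu) < 0$ and $\underline{u}_\mu \not\equiv 0$. As $J_\mu$ fails to be $C^1$ where $u$ vanishes, I would extract the Euler--Lagrange equation through the Boccardo--Orsina / Canino--Sciunzi technique (see \cite{petitta1, ghosh1}): given $\varphi \geq 0$ in $X$, minimality gives $J_\mu(\underline{u}_\mu + t\varphi) \geq J_\mu(\underline{u}_\mu)$, and letting $t \downarrow 0^+$ together with Fatou's lemma on the singular integrand produces the one-sided variational inequality $\langle J_\mu'(\underline{u}_\mu), \varphi\rangle \geq 0$, where the singular term is read as $\mu\int \underline{u}_\mu^{-\gamma}\varphi$. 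The reverse inequality (and hence the full weak equation for arbitrary $\varphi \in X$) is then obtained by the standard density/truncation procedure using $(\underline{u}_\mu + t\varphi)^+$ as a test function. Uniqueness follows at once from Lemma \ref{weak comparison}: any two solutions dominate each other a.e.

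For the pointwise bound $\underline{u}_\mu \geq \epsilon_\mu v_0$, I would fix $v_0>0$ (e.g.\ a first eigenfunction of $(-\Delta)^{1/2}+I$ on a large ball extended by zero, or an explicit Bessel-type function in $X$) and choose $\epsilon_\mu > 0$ so small that $w := \epsilon_\mu v_0$ is a weak sub-solution,
\[
(-\Delta)^{\frac{1}{2}}w + w + (\ln|\cdot|*|w|^2) \leq \mu w^{-\gamma} \quad \text{in } \mathbb{R},
\]
which is possible because the left-hand side scales like $\epsilon_\mu + \epsilon_\mu^3$ while the right-hand side scales like $\epsilon_\mu^{-\gamma}$ and blows up as $\epsilon_\mu \to 0^+$. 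Lemma \ref{weak comparison} applied to $w$ and $\underline{u}_\mu$ then gives the claimed inequality. The main obstacle I anticipate is the interplay between the non-differentiability of $J_\mu$ and the need for $\underline{u}_\mu > 0$ a.e.\ just to make sense of the weak formulation: the Boccardo--Orsina manipulation requires integrability of $\underline{u}_\mu^{-\gamma}$ against test functions, which demands a quantitative lower bound on $\underline{u}_\mu$, while that lower bound is itself proved through the sub-solution construction. This circular dependence is broken by first establishing qualitative positivity (the minimizer cannot vanish on a set of positive measure without the perturbation $\underline{u}_\mu + \delta v_0$ having strictly smaller energy for small $\delta > 0$, contradicting minimality) and only then upgrading to the pointwise bound via the comparison principle.
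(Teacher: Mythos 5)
Your overall strategy coincides with the paper's in its first half: both obtain $\underline{u}_{\mu}$ as a global minimizer of the natural energy on the positive cone, both get nontriviality from the energy being negative at $\epsilon v_0$ for small $\epsilon>0$, and your explicit Boccardo--Orsina-type derivation of the Euler--Lagrange equation is in fact more careful than the paper, which passes from ``global minimizer'' to ``weak solution'' without comment. The divergence is in the last two steps. For the lower bound $\underline{u}_{\mu}\geq\epsilon_{\mu}v_0$ the paper does not build a sub-solution: it assumes $w=(\epsilon_{\mu}v_0-u_0)^+\not\equiv 0$, studies $\zeta(t)=E(u_0+tw)$, and derives a contradiction from the monotonicity of $\zeta'$ on the convex cone combined with $\frac{d}{dt}E(tv_0)|_{t=\epsilon}<0$ for $\epsilon\in(0,\epsilon_{\mu}]$; uniqueness is then deduced from strict convexity of $E$ on $X^+$, not from comparison.

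The gap in your route is the appeal to Lemma \ref{weak comparison} for both the pointwise bound and uniqueness. That lemma, as stated and proved, compares only the operator $(-\Delta)^{\frac{1}{2}}(\cdot)-\mu(\cdot)^{-\gamma}$; equation \eqref{auxprob_appendix} also carries the zeroth-order term (harmless, since it is monotone) and the logarithmic Choquard term. The kernel $\ln|x-y|$ changes sign, so when you subtract the relations satisfied by a sub-solution $w$ and by $\underline{u}_{\mu}$ and test against $(w-\underline{u}_{\mu})^+$, the contribution coming from the difference of the convolution terms has no definite sign, and the comparison argument does not close; the same objection applies to ``any two solutions dominate each other a.e.'' There is also a difficulty in the sub-solution construction itself at infinity: $(\ln|\cdot|*|\epsilon_{\mu}v_0|^2)(x)\sim\epsilon_{\mu}^2\ln|x|\,\|v_0\|_2^2\to+\infty$ as $|x|\to\infty$, so the claimed scaling ``$\epsilon_{\mu}+\epsilon_{\mu}^3$ versus $\epsilon_{\mu}^{-\gamma}$'' is not uniform in $x$ and must be checked against the decay of $v_0$ (which governs the growth of $v_0^{-\gamma}$). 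To repair the proof you would either have to establish an extended comparison principle that absorbs the Choquard term --- nontrivial precisely because of the sign change --- or stay entirely at the energy level, as the paper does, and run the convexity argument on $\zeta(t)=E(u_0+tw)$, which never requires comparing the nonlocal terms pointwise.
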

\begin{proof}
	We follow the proof in
	 {\sc Choudhuri} \cite{dc_zamp}
	and
	 {\sc Choudhuri-Saoudi} \cite{chou2}. 
	 First, we note that an energy functional on $X$ formally corresponding to \eqref{auxprob_appendix} can be defined as follows.
	\begin{align}
	\label{ef_aux}
	E(u)&=\frac{1}{2}\|u\|^2+W(u)-\frac{\mu}{1-\gamma}\int_{\Omega}(u^+)^{1-\gamma}dx,
	\end{align}
	for all $u\in X$ where $u^+(x)=\max\{u(x),0\}$. By using the Poincar\'{e} inequality, this functional is coercive and continuous on $X$. It follows that $E$ possesses a global minimizer $u_0\in X$. Obviously, $u_0\neq 0$ since $E(0)=0>E(\epsilon v_0)$, for sufficiently small $\epsilon$ and some $v_0>0$ in $\mathbb{R}$.\\
	Next if $u_0$ is a global minimizer for $E$, then $|u_0|$ is also a global minimizer. This is because $E(|u_0|)\leq E(u_0)$. Clearly, the equality holds if and only if $u_0^-=0$ a.e. in $\mathbb{R}$. Here $u^-(x)=\min\{-u(x),0\}$. In other words we must have $u_0\geq 0$, i.e. $u_0\in X^+$ where 
	$$X^+=\{u\in X:u\geq 0~\text{a.e. in}~\mathbb{R}\}$$
	is the positive cone in $X$.\\
	Furthermore, we will show that $u_0\geq \epsilon v_0>0$ holds a.e. in $\mathbb{R}$ for small enough $\epsilon$. We observe that, 
	\begin{align}\label{neg_der}
	\begin{split}
	\frac{d}{dt}E(tv_0)|_{t=\epsilon}=&\epsilon\|v_0\|^2+4\epsilon^3C(v_0^4)-\mu\epsilon^{-\gamma}\int_{\Omega}v_0^{1-\gamma}dx<0,
	\end{split}
	\end{align}
	whenever $\epsilon\in (0,\epsilon_{\mu}]$, for some sufficiently small $\epsilon_{\mu}$. We now show that $u_0\geq \epsilon_{\mu}v_0$. Suppose we assume the contrary that $w=(\epsilon_{\mu}v_0-u_0)^+$ does not vanish identically in $\mathbb{R}$. We denote $\mathbb{R}_+=\{x\in\mathbb{R}:w(x)>0\}.$
	We will analyye the function $\zeta(t)=E(u_0+tw)$ of $t\geq 0$. This function is convex when defined over $X^+$ being convex. Furthermore $\zeta'(t)=\langle E'(u_0+tw),w \rangle$ is nonnegative and nondecreasing for $t>0$. Consequently, for $0<t<1$ we have
	\begin{align}\label{ineq_appendix}
	\begin{split}
	0\leq \zeta'(1)-\zeta'(t)&=\langle E'(u_0+w)-E'(u_0+tw),w\rangle=\int_{\mathbb{R}_+}E'(u_0+w)dx-\zeta'(t)<0
	\end{split}
	\end{align}
	by inequality \eqref{neg_der} and $\zeta'(t)\geq 0$ with $\zeta'(t)$ being nondecreasing for every $t>0$. which leads to a contradiction. Therefore $w=0$ in $\mathbb{R}$ and hence $u_0\geq \epsilon_{\mu}v_0$ a.e. in $\mathbb{R}$.\\
	Moreover, since the functional $E$ is strictly convex on $X^+$, we conclude that $u_0$ is the only critical point of $E$ in $X^+$ with the property $\underset{V}{\text{ess}\inf}u_0>0$, for any compact subset $V\subset\mathbb{R}$. Thus we choose $\underline{u}_{\mu}=u_0$ in the cutoff functional.
\end{proof}
\begin{remark}\label{obs_pos}  We now perform an analysis on a solution (if it exists). Suppose that $u$ is a solution to \eqref{main}. Then we observe the following
	\begin{enumerate}
		\item If $u$ is a global minimizer, then clearly $E(u)\leq E(|u|)$. Further, $E(u)\geq E(|u|)$ is always true due to the first term of the energy functional. Thus $u^-=0$ a.e. in $\mathbb{R}$.
		\item In fact, a solution to \eqref{main} can be considered to be positive, i.e. $u>0$ a.e. in $\mathbb{R}$, due to the presence of the singular term.
	\end{enumerate}
	Therefore without loss of generality, we may assume that the solution is positive. 
\end{remark}
We finally have the following result.
\begin{lemma}[A priori analysis]\label{u_greater_u_lambda}
	Fix a $\mu\in(0,\mu_0)$. Then a solution of \eqref{main}, say $u>0$, is such that $u> \underline{u}_{\mu}$ a.e. in $\mathbb{R}$.
\end{lemma}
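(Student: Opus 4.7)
The plan is a weak sub-super-solution comparison. Since $u>0$ a.e.\ (Remark \ref{obs_pos}), $\underline{u}_{\mu}>0$ a.e.\ (Lemma \ref{existence_positive_soln}), and $(A_{3})$ forces $f(t)>0$ for every $t>0$, one checks via \eqref{auxprob} that $\underline{u}_{\mu}$ is a weak subsolution of \eqref{main}: for every $\phi\in X$ with $\phi\geq 0$,
\[\langle(-\Delta)^{1/2}\underline{u}_{\mu},\phi\rangle+\int \underline{u}_{\mu}\phi+C(\underline{u}_{\mu}^{2},\underline{u}_{\mu}\phi)-\mu\int \underline{u}_{\mu}^{-\gamma}\phi\;=\;0\;\leq\;\int f(\underline{u}_{\mu})\phi.\]
I would subtract the weak form of \eqref{main} from this inequality, test against $\varphi:=(\underline{u}_{\mu}-u)^{+}\in X$, and drive the resulting inequality to force $\varphi\equiv 0$ on $\Sigma:=\{\underline{u}_{\mu}>u\}$.

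\noindent\textbf{Coercive and mild terms.} Writing $\tilde\psi:=\underline{u}_{\mu}-u$ so $\varphi=\tilde\psi^{+}$, the algebraic identity $(\tilde\psi(x)-\tilde\psi(y))(\varphi(x)-\varphi(y))\geq(\varphi(x)-\varphi(y))^{2}$ used in the proof of Lemma \ref{weak comparison} yields $\langle(-\Delta)^{1/2}\tilde\psi,\varphi\rangle\geq[\varphi]_{1/2,2}^{2}$, while $\int\tilde\psi\,\varphi=\|\varphi\|_{2}^{2}$; together these contribute $\|\varphi\|^{2}$ to the LHS. On $\Sigma$ one has $\underline{u}_{\mu}^{-\gamma}-u^{-\gamma}<0$, so the $\mu$-discrepancy is non-positive. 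The $f$-mismatch is controlled via $(A_{2})$: shrinking $\mu_{0}$ and invoking Lemmas \ref{bdd_seq} and \ref{existence_positive_soln} to make both $u$ and $\underline{u}_{\mu}$ pointwise small on $\Sigma$ yields $|f(\underline{u}_{\mu})-f(u)|\leq\varepsilon\tilde\psi^{+}$ with $\varepsilon$ arbitrarily small, so the $f$-contribution is at most $\varepsilon\|\varphi\|_{2}^{2}$, which is easily absorbed.

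\noindent\textbf{The main obstacle: the Choquard mismatch.} The crux is
\[\mathcal{I}:=C(\underline{u}_{\mu}^{2},\underline{u}_{\mu}\varphi)-C(u^{2},u\varphi),\]
which has no pointwise sign since $\ln|x-y|$ changes sign. I would rewrite
\[\mathcal{I}=C(\underline{u}_{\mu}^{2},\varphi^{2})+C\bigl(\tilde\psi(\underline{u}_{\mu}+u),u\varphi\bigr),\]
split $\tilde\psi=\varphi-\tilde\psi^{-}$ to separate genuinely $\varphi$-quadratic pieces from the cross terms involving $\tilde\psi^{-}$, and apply the continuity estimates \eqref{cont_B}, \eqref{cont_A} for $C=A-B$ together with Sobolev embeddings to obtain $|\mathcal{I}|\leq c(\|u\|_{X}+\|\underline{u}_{\mu}\|_{X})^{2}\|\varphi\|_{X}^{2}$. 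Invoking Lemma \ref{MS_lemma} to lift the $H^{1/2}$-bound from Lemma \ref{bdd_seq} to an $X$-bound and shrinking $\mu_{0}$ further makes $c<1$, so that $\mathcal{I}$ can be absorbed into $\|\varphi\|^{2}$ on the LHS, forcing $\varphi\equiv 0$ and hence $u\geq\underline{u}_{\mu}$ a.e. Strict inequality follows from a fractional strong-maximum-principle argument applied to $w:=u-\underline{u}_{\mu}\geq 0$: if $w$ vanished on a positive-measure set, unique continuation would give $w\equiv 0$, and then subtracting the two equations globally would yield $f(u)\equiv 0$, contradicting $(A_{3})$. The most delicate technical point, and the main obstacle to making the plan rigorous, is the quantitative control of the $\tilde\psi^{-}$-cross terms in $\mathcal{I}$, where one must carefully exploit the smallness of $\mu_{0}$ and the splitting $\ln r=\ln(1+r)-\ln(1+1/r)$ to rewrite $C$ in terms of the nonnegative kernels $A$ and $B$.
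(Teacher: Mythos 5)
Your overall strategy --- subtract the two weak formulations, test against $\varphi:=(\underline{u}_{\mu}-u)^{+}$, use the favourable sign of the singular term on $\{\underline{u}_{\mu}>u\}$, and then run a separate argument for strictness --- is the same as the paper's, and you are right that the Choquard mismatch is the real obstruction (the paper simply asserts that the relevant sum, including $\langle W'(\underline{u}_{\mu})-W'(u),\underline{u}_{\mu}-u\rangle$, is nonnegative, without confronting the sign change of $\ln|x-y|$). But your proposed absorption does not close. The coercive quantity you extract is $\|\varphi\|^{2}$, i.e.\ the $H^{\frac{1}{2}}$-norm of $\varphi$, whereas any bound on $\mathcal{I}$ obtained through \eqref{cont_A} necessarily carries a factor $\|\varphi\|_{*}$ and hence lives at the level of $\|\varphi\|_{X}^{2}$; since $\|\cdot\|_{*}$ is not controlled by $\|\cdot\|$ (this is exactly why $X$ is a proper subspace of $H^{\frac{1}{2}}(\mathbb{R})$), a term $c\,\|\varphi\|_{X}^{2}$ cannot be absorbed into $\|\varphi\|^{2}$ no matter how small $c$ is. Moreover, the smallness of your prefactor rests on $\|u\|_{X}$ and $\|\underline{u}_{\mu}\|_{X}$ being small: Lemma \ref{bdd_seq} gives an $H^{\frac{1}{2}}$-bound for Cerami sequences at the mountain-pass level, governed by the size of $C_q$ rather than by $\mu$, and says nothing about an \emph{arbitrary} positive solution $u$ of \eqref{main}, which is what the lemma quantifies over.

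The $f$-mismatch step has a similar defect. Hypothesis $(A_{2})$ yields $|f(t)|\leq\varepsilon|t|$ for $|t|$ small, not the Lipschitz-type bound $|f(\underline{u}_{\mu})-f(u)|\leq\varepsilon(\underline{u}_{\mu}-u)$ that your absorption requires ($f$ is only assumed continuous), and there is no reason for $u$ and $\underline{u}_{\mu}$ to be \emph{pointwise} small on $\Sigma$: all the available estimates are norm bounds. Finally, for the strict inequality the paper does not use unique continuation; it assumes $S=\{u=\underline{u}_{\mu}\}$ has positive measure, tests the equation for $u$ against a cutoff supported essentially on $S$, and arrives at $0=-\mu\int u^{-\gamma}-\int f(u)<0$, a contradiction. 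Your route via ``$w\geq 0$ vanishing on a set of positive measure forces $w\equiv 0$'' is a genuine unique-continuation statement for $(-\Delta)^{\frac{1}{2}}$ that you would have to prove separately; the elementary pointwise strong maximum principle for the fractional Laplacian only excludes interior zeros of a nonnegative supersolution, and even invoking that requires first checking that $w=u-\underline{u}_{\mu}$ is a supersolution of a suitable linear inequality, which again runs into the unsigned Choquard difference.
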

\begin{proof}
	Fix $\mu\in(0,\mu_0)$ and let $u\in X$ be a positive solution to \eqref{main} and $\underline{u}_{\mu}>0$ be a solution to \eqref{auxprob_appendix}. We will show that $u\geq \underline{u}_{\mu}$ a.e. in $\mathbb{R}$. Thus, we let ${\mathbb{R}}^*=\{x\in\mathbb{R}:u(x)<\underline{u}_{\mu}(x)\}$ and from the equation satisfied by $u$, $\underline{u}_{\mu}$, we have 
	\begin{align}\label{comp_1}
	\begin{split}
	0\leq&\langle\Delta^{\frac{1}{2}}(\underline{u}_{\mu}-u),\underline{u}_{\mu}-u\rangle_{\underline{\mathbb{R}}}+\langle W'(\underline{u}_{\mu})-W'(u),\underline{u}_{\mu}-u\rangle+\int_{\mathbb{R}}(f(u)-f(\underline{u}_{\mu}))(\underline{u}_{\mu}-u)dx\\
	\leq&\mu\int_{\mathbb{R}^*}(\underline{u}_{\mu}^{-\gamma}-u^{-\gamma})(\underline{u}_{\mu}-u)dx;~\text{by}~(A_1)\leq 0.
	\end{split}
	\end{align}
	Furthermore, we have 
	\begin{align}\label{comp_2}
	\langle\Delta^{\frac{1}{2}}(\underline{u}_{\mu}-u),\underline{u}_{\mu}-u\rangle_{\mathbb{R}^*}&\geq 0. 
	\end{align}
	Hence, by \eqref{comp_1} and \eqref{comp_2}, we obtain $u\geq\underline{u}_{\mu}$ a.e. in $\mathbb{R}$.\\	
	Now suppose $S=\{x\in\Omega:u(x)=\underline{u}_{\lambda}(x)\}$. Clearly $S$ is a measurable set and hence for any $\delta>0$ there exists a closed subset $F$ of $S$ such that $|S\setminus F|<\delta$. Furthermore, let $|S|>0$. Define a test function $\varphi\in C_c^1(\mathbb{R})$ such that 
	\begin{equation}\varphi(x)=\begin{cases}
	1,& ~\text{if}~ x\in F\\
	0<\varphi<1,&~\text{if}~x\in S\setminus F\\
	0,&~\text{if}~x\in \mathbb{R}\setminus S.
	\end{cases}\end{equation}
	Since $u$ is a weak solution to \eqref{main}, we have 
	\begin{align}\label{equality_breakage}
	\begin{split}
	0=&(\langle (-\Delta)^{\frac{1}{2}}u,\varphi\rangle_{\mathbb{R}}+\int_{F}W'(u) dx+\int_{S\setminus F}W'(u) \varphi dx-\mu\int_{F}u^{-\gamma}dx\\ 
	&-\mu\int_{S\setminus F}u^{-\gamma}\varphi dx-\int_{F}f(u)dx-\int_{S\setminus F}f(u)\varphi dx\\
	=&-\mu\int_{F}u^{-\gamma}dx-\mu\int_{S\setminus F}u^{-\gamma}\varphi dx-\int_{F}f(u)dx-\int_{S\setminus F}f(u)\varphi dx<0.
	\end{split}
	\end{align}
	This is a contradiction. Therefore, $|S|=0$. Hence, $u>\underline{u}_{\lambda}$ a.e. in $\mathbb{R}$.
\end{proof}

\section*{Acknowledgements}
\noindent  
The first author was supported by the Indian Council of Scientific and Industrial Research grant 25(0292)/18/EMR-II. The second author was supported by the Slovenian Research Agency grants P1-0292, N1-0114, N1-0083, N1-0064, and J1-8131.

\end{document}